\makeatletter \@addtoreset{equation}{section} \makeatother
\renewcommand\thefigure{\thesection.\@arabic\c@figure}
\renewcommand\thetable{\thesection.\@arabic\c@table}
\newtheorem{theorem}{Theorem}[section]
\newtheorem{lemma}[theorem]{Lemma}
\newtheorem{proposition}[theorem]{Proposition}
\newtheorem{definition}[theorem]{Definition}
\theoremstyle{remark}
\newtheorem{remark}[theorem]{Remark}
\newcommand{\mc}[1]{{\mathcal #1}}
\newcommand{\bb}[1]{{\mathbb #1}}
\newcommand{\ZZ}{\mathbb{Z}}
\renewcommand{\leq}{\leqslant}
\renewcommand{\geq}{\geqslant}
\renewcommand{\le}{\leqslant}
\renewcommand{\ge}{\geqslant}
\newcommand{\tdel}{\tilde{\Delta}}
\newcommand{\ondos}{\mc O\big( \tfrac{1}{n^2}\big)}
\title{Interpolation process between standard diffusion and fractional diffusion}
\date{}
\author{C\'edric Bernardin}
\address{Universit\'e C\^ote d'Azur, CNRS, LJAD\\
Parc Valrose\\
06108 NICE Cedex 02, France}
\email{{\tt cbernard@unice.fr}}
\author{Patr\'{\i}cia Gon\c{c}alves}
\address{\noindent Center for Mathematical Analysis,  Geometry and Dynamical Systems \\
Instituto Superior T\'ecnico, Universidade de Lisboa\\
Av. Rovisco Pais, 1049-001 Lisboa, Portugal}
\email{patricia.goncalves@math.tecnico.ulisboa.pt}
\author{ Milton Jara}
\address{\noindent Instituto de Matem\'atica Pura e Aplicada\\ Estrada Dona Castorina 110\\ 22460-320 Rio De Janeiro, Brazil.}
\email{mjara@impa.br}
\author{Marielle Simon}
\address{\noindent Inria Lille -- Nord Europe \\ 40 avenue du Halley \\ 59650 Villeneuve d'Ascq, France\\ \emph{and} Laboratoire Paul Painlev\'e, UMR CNRS 8524 \\ Cit\'e Scientifique \\ 59655 Villeneuve d'Ascq, France}
\email{marielle.simon@inria.fr}
\begin{document}

\maketitle

\begin{abstract}
We consider a Hamiltonian lattice field model with two conserved quantities, energy and volume, perturbed by stochastic noise preserving the two previous quantities. It is known that this model displays anomalous diffusion of energy of fractional type due to the conservation of the volume \cite{BS, BGJ}. We superpose to this system a second stochastic noise conserving energy but not volume. If the intensity of this noise is of order one, normal diffusion of energy is restored while it is without effect if intensity is sufficiently small. In this paper we investigate the nature of the energy fluctuations for a critical value of the intensity. We show that the latter are described by an Ornstein-Uhlenbeck process driven by a L\'evy process which interpolates between Brownian motion and  the maximally asymmetric $3/2$-stable L\'evy process. This result extends and solves a problem left open in \cite{BGJSS}.
\end{abstract}

\section{Introduction}

Since the seminal work of Fermi-Pasta-Ulam (FPU) \cite{FerPasUla}, heat conduction in chains of oscillators has attracted a lot of attention. In one-dimensional chains, superdiffusion of energy has been observed numerically in unpinned FPU chains, which corresponds to anomalous thermal conductivity. This anomalous thermal conductivity is generally attributed to a small scattering rate for low modes, which is due to momentum conservation. When the system has a pinning potential, {\textcolor{black}{destroying the conservation of momentum}}, normal diffusion of energy is expected. In \cite{BBO1,BBO2}, it was proposed to perturb the Hamiltonian dynamics with stochastic interactions that conserve energy and momentum, like random exchanges of velocity between nearest neighbours. These models  have the advantage to be studied rigorously keeping at the same time the features of deterministic models. For linear interactions, in dimension $d \geq 3$, energy follows normal diffusion, while in dimensions $d=1,2$ energy is superdiffusive \cite{BBO2}. If a pinning potential is added to the dynamics, normal diffusivity can be proved regardless of the dimension.

In \cite{JKO2} it was proved that in dimension $d=1$, energy fluctuations follow the {\it fractional heat equation} $\partial_t u = -c(-\Delta)^{3/4} u$, with $c>0$. As mentioned above, in the presence of a pinning potential, energy fluctuations follow the usual heat equation $\partial_t u = D\; \Delta u$, where $D>0$ is the diffusion coefficient.  Our goal is to provide a crossover between these two universality classes, aiming for a better understanding of the origin of the superdiffusivity of the energy in one-dimensional chains. In particular, we aim to clarify the role of the conservation of momentum.

The stochastic chains considered in \cite{BBO2} have {\it three} conserved quantities: the energy, the momentum and the {\it stretch} of the chain. Since we are interested in the role of the conservation of momentum, for simplicity we will consider a Hamiltonian lattice field model introduced by Bernardin and Stoltz \cite{BS}, which has only two conserved quantities, see Section \ref{model}, but which displays similar superdiffusion features. We call these conserved quantities {\it energy} and {\it volume}. 
In \cite{BS} the authors add to the deterministic dynamics an energy and volume conservative Poissonian noise, which is discrete in nature. Here we consider instead a conservative Brownian noise, for a reason that will be explained ahead. In \cite{BGJ} a similar result to \cite{JKO2} has been obtained by different techniques for these models. 

Let $n \in \bb N$ be a scaling parameter, which represents the inverse mesh of the stochastic chain. We add to the dynamics a second stochastic interaction that conserves only the energy and we scale down the strength of this second interaction by $\frac{a}{n}$, with $a>0$. We prove that energy fluctuations follow an evolution equation of the form \[\partial_t u = \mc L_a u,\] where $\mc L_a$ has the Fourier representation
\[
\widehat{\mc L_a}(k) = -\frac{4\pi^2 k^2}{\sqrt{a+ 2i \pi k}}, \qquad {\color{black}k\in\bb R.}
\]
In particular, we note that $\mc L_a \to -c\Big\{ (-\Delta)^{3/4} -\nabla(-\Delta)^{1/4} \Big\}$ as $a \to 0$ and $\sqrt{a} \mc L_a \to \Delta$ as $a \to \infty$, providing in this way a crossover between anomalous and normal diffusion of energy in the model. 
Note as well that the interpolation between the fractional and normal Laplacians can be understood as an ultraviolet cut-off at modes of order $\mc{O}(a)$: low modes behave diffusively, while high modes behave superdiffusively.

In \cite{BGJSS} another version of the model of Bernardin and Stoltz of \cite{BS} was considered. An almost complete phase diagram was obtained, although the interpolating part of the diagram described here was missing there. The interested reader may verify that the methods presented in this article allow to complete the phase diagram in \cite{BGJSS} as well as to prove the results stated there to the model considered here.

\subsection*{Energy fluctuations}

Let us describe in a more precise way the main result proved in \cite{BGJ} for the model considered here. Let $\{\omega_x^n(t)\}_{x \in \bb Z} \in {\bb R}^{\bb Z}$ be the infinite dimensional diffusion process defined in Section \ref{model}. The (formal) conserved quantities of the model are the energy $\sum_{x \in \bb Z} [\omega^n_x (t)]^2$ and the volume $\sum_{x \in \bb Z} \omega^n_x (t)$. Let $\{\mu_\beta\, ;\, \beta>0\}$ be the family of Gibbs  homogeneous product measures which are invariant by the dynamics. Under $\mu_\beta$, the random variables $\{\omega_x\}_{x \in \bb Z}$ are independent centered Gaussian variables with variance $\beta^{-1}$.  The probability measure on the space of trajectories which is induced by the initial law $\mu_\beta$ and the Markov process $\{\omega_x^n(t)\}_{x \in \bb Z}$ is denoted by $\bb P_\beta$ and its corresponding expectation by $\bb E_\beta$. Define the {\it energy correlation function} as
\[
S_n(t,x) = \tfrac{\beta^2}{2}\;   \bb E_\beta\Big[\big([\omega_x^n(t)]^2-\beta^{-1}\big)\big([\omega_0^n(0)]^2-\beta^{-1}\big)\Big].
\]
We prove here the following scaling limit for $S_n(t,x)$: for any test functions $\varphi, \psi: \bb R \to \bb R$ in the usual Schwartz space $\mc S(\bb R)$,
\begin{equation}
\label{limonada}
\lim_{n \to \infty}\; \tfrac{1}{n}\sum_{x,y \in \bb Z} S_n(tn^{3/2},y-x) \varphi\big(\tfrac{x}{n}\big) \psi\big(\tfrac{y}{n}\big) = \iint_{\bb R^2} P_t(v-s) \varphi(s) \psi(v) \; ds dv,
\end{equation}
where $P_t(\cdot)$ has the Fourier representation
\[
\widehat{P_t} (k) = e^{-t \widehat{\mc L_a}(k)}, \qquad k \in \bb R.
\]
In other words, $S_n(tn^{3/2},nx)$ converges, in a weak sense, to the fundamental solution of the evolution equation $\partial_t u = \mc L_a u$. 
The case $a =0 $ is the case considered in \cite{BGJ} for the model with a Poissonian noise.  
That result is a simple consequence of a stronger scaling limit, which is the main result of this article. To state it properly let us define the {\it energy fluctuation field}  as 
\begin{equation}
\label{naranja}
\mc E_t^n (\varphi) = \tfrac{1}{\sqrt n } \sum_{x \in \bb Z} \big( \big[\omega_x^n(t)\big]^2-\beta^{-1}\big) \varphi \big(\tfrac{x}{n}\big)
\end{equation}
for  test functions $\varphi:\bb R \to\bb R$ in $\mc S(\bb R)$. We will prove that this field converges in law to the Gaussian process which is the stationary solution of the equation
 \begin{equation}
\label{manzana}
\partial_t \mc E_t = \mc L_a^{\star} \mc E_t  + \sqrt{2\beta^{-2}(-\mc S_a)} \; \nabla \mc W_t,
\end{equation}
where $\mc W_t$ is a space-time white noise, $\mc L_a^{\star}$ is the adjoint of $\mc L_a$ in $\bb L^2(\bb R)$ and $\mc S_a$ is its symmetric part given by $\mc S_a= \frac{1}{2}(\mc L_a+ \mc L^{\star}_a)$. This convergence implies the limit
\[
\lim_{n \to \infty} \bb E_\beta\big[ \mc E_t^n(\varphi) \mc E_0^n(\psi)\big] = \bb E_\beta\big[ \mc E_t(\varphi)\mc E_0(\psi)\big],
\]
which is exactly the limit stated in \eqref{limonada}.

We point out that with respect to \cite{BGJ} and \cite{BGJSS}, the model considered in this article has a Brownian noise instead of a Poissonian noise. At the level of the correlation function $S_n(t,x)$, the choice of a Poissonian or a Brownian noise does not make a sensitive difference. In particular, the method of proof in this article allows to prove \eqref{limonada} also for Poissonian noises chosen in a proper way. However, at the level of the Gaussian fluctuations, key tightness estimates do not hold for Poissonian noises due to rare events that may introduce huge discontinuities on the observables we are interested in. We believe that at the level of finite-dimensional distributions the process \eqref{manzana} still describes the scaling limit of energy fluctuations in the model with the Poissonian noise considered in \cite{BGJSS}. However, it is not clear whether the obstructions in order to prove tightness are technical or intrinsic to those kind of noises.

\subsection*{A sketch of the proof}

Our proof of the convergence of the energy fluctuation field \eqref{naranja} follows the usual scheme of convergence in law of stochastic processes: we show tightness of the processes $\mc E_t^n$ in a suitable topology, then we prove that any limit point of the sequence $\{\mc E_t^n\}_{n\in\bb N}$ satisfies a weak formulation of the equation \eqref{manzana} and then we rely on a uniqueness result for the solutions of \eqref{manzana}. 

%\sout{Various technical difficulties appear in the way. In the case $\mc L = \Delta$, one usually uses Holley and Stroock's theory to formulate a martingale problem for \eqref{manzana}. They construct solutions on the space $\mc S'(\bb R)$ of tempered distributions. In our case, the operator $\mc L$ is the generator of a L\'evy process and in particular the evolution equation $\partial_t u = \mc L u$ does not leave the set $\mc S'(\bb R)$ invariant. Therefore we need an alternative approach. We rely on the stationarity of the process in order to formulate a martingale problem stated in Definition \ref{def:solOU}, for which we can prove uniqueness in law. We point out that in our formulation stationarity is not strictly necessary; some {\it a priori} bounds on the moments of $\mc E_t(\varphi)$ would suffice. The idea is that stationarity allows to extend $\varphi \mapsto \mc E_t(\varphi)$ continuously from $\mc S(\bb R)$ to $\bb L^2(\bb R)$. }

One technical difficulty comes from what is known in the literature by the {\it replacement lemma}: it is not very difficult to write down a martingale decomposition for $\mc E_t^n$ that should heuristically converge to the martingale problem associated to $\mc E_t$. But the drift term of this martingale decomposition involves the energy current $\omega_x^n(t) \omega_{x+1}^n(t)$. This current is {\it not} a function of the energy and therefore we say that the martingale problem for $\mc E_t^n$ is {\it not} closed. To overcome that, we need to replace the current $\omega_x^n(t) \omega_{x+1}^n(t)$ by a function of the energy. This is accomplished by studying the relation between the energy fluctuations and the fluctuations of the {\it correlation field} given by
\begin{equation}
\label{frutilla}
\tfrac{1}{n^{3/4}} \sum_{x,y \in \bb Z} \big( \omega_x^n(t) \omega_y^n(t) - \delta_{x,y} \beta^{-1}\big) f\big( \tfrac{x+y}{2n}, \tfrac{|y-x|}{\sqrt n}\big),
\end{equation}
on some regular two-dimensional test function $f$. Above, $\delta_{x,y}$ is the usual indicator function that equals 1 if $x=y$ and 0 otherwise. 
Note that, at least heuristically, the energy current is given by the correlation field evaluated at the diagonal $y=x+1$.
The introduction of this field is one of the main conceptual innovations in \cite{BGJ}. This field can be interpreted as the tensor product of the volume fluctuation field with itself. It turns out that volume fluctuations have two characteristic time scales. First, the speed of sound associated to the volume is equal to $2$, and therefore, volume fluctuations evolve in the hyperbolic time scale $tn$ following a linear transport equation. If the volume fluctuation field is modified by a Galilean transformation that drives out the transport dynamics, then it evolves in a diffusive time scale $tn^2$, following an equation of the form \eqref{manzana} with the operator $\mc L$ replaced by the usual Laplacian operator $\Delta$. In the definition of the correlation field \eqref{frutilla}, we introduced two different spatial scales. This non-homogeneous spatial scaling allows  to observe both natural time scales at once. In fact, the correlation field \eqref{frutilla} has a scaling limit in the hyperbolic time scale $tn$ given by the stationary solution of
\[
d {\color{black}\mc Z_t} = (-\partial_x+ \partial_{yy}^2 -a)\; {\color{black}\mc Z_t}\;  dt + d \mc M_t,
\]
where $\mc M_t$ is an infinite-dimensional martingale (see also Section \ref{sec:ou} for more details). 
We point out that although we do not prove neither this result\footnote{{\textcolor{black}{This result can be guessed by using the computations in Appendix \ref{subsec:beurkk} but its rigorous proof is not trivial and would require a paper by itself. The interested reader is invited to consult \cite{J} for a similar result in the context of the symmetric simple exclusion process.}}} nor anything related to it, this limiting equation was used as a guideline for the computations below. Since the energy fluctuations evolve in the superdiffusive time scale $tn^{3/2}$, the correlation field acts as a fast variable for the evolution of the energy. 

The structure of the paper is described as follows. Below we introduce the model with notations, and we state the main result of this work, namely Theorem \ref{theo}. Section \ref{sec:martingales} is devoted to the decomposition of the energy field into a martingale problem, using both the energy field and the correlation field. In Sections \ref{sec:tight} and \ref{sec:charac}, we prove, respectively, tightness of the processes and  characterization of their limit points, for establishing the convergence. Appendix \ref{app:levyA} collects some results on the L\'evy operator $\mc L$, while in Appendices \ref{app} and \ref{QV_convergence} we gather all technical details used along the proof. 

\section{Preliminaries}

\subsection{The model}
\label{model}
In this section we define the BS model (as introduced in \cite{BS}) with continuous noises. For that purpose we need to introduce two real parameters: $\lambda >0$ and $\gamma_n >0$, the latter depending on a scale parameter $n\in\bb N$.  Let us consider a system of diffusions evolving on the state space $\Omega := \bb R^\bb Z$, in the time scale $n^{3/2}$, and generated by the operator $n^{3/2} \mc L_n$, where $\mc L_n$ is decomposed as the sum $\mc L_n = \mc A + \lambda \mc S_{1}+ \gamma_n \mc S_{2}$, where 
\begin{align*}
\mc A &= \sum_{x \in \bb Z} \big( \omega_{x+1} - \omega_{x-1}\big) \tfrac{\partial}{\partial \omega_x} \\
\mc S_{1}&  = \sum_{x \in \bb Z} (\mathcal{X}_{x} \circ \mathcal{X}_{x} ), \qquad  \mc S_2 = \sum_{x \in \bb Z} (\mathcal{Y}_x \circ \mathcal{Y}_x),
\end{align*}
and the family of operators $\{\mc X_x, \mc Y_x\}_{x\in \bb Z}$ is given by
\begin{align*}
\mc X_x & = (\omega_{x+1}-\omega_x) \tfrac{\partial}{\partial \omega_{x-1}}+(\omega_x-\omega_{x-1}) \tfrac{\partial}{\partial \omega_{x+1}}+(\omega_{x-1}-\omega_{x+1}) \tfrac{\partial}{\partial \omega_x},
\\
\mc Y_x& = \omega_{x+1} \tfrac{\partial}{\partial \omega_x} -\omega_x \tfrac{\partial}{\partial \omega_{x+1}}.
\end{align*}
{\color{black}{The generator ${\mc A}$ is the generator corresponding to the infinite system of coupled ODE's $d\omega_{x} (t) = (\omega_{x+1} (t)- \omega_{x-1} (t) ) \, dt, \; x \in \ZZ$. A simple change of variables \cite{BS} shows that it is equivalent to the dynamics generated by an infinite system of coupled harmonic oscillators. With this change of variables, $\omega_x$ represents either the momentum of a particle or the interdistance between two nearest neighbor particles. The diffusion operator $\mc X_x$ is nothing but the generator of a Brownian motion on the circle $\{ (\omega_{x-1}, \omega_x,\omega_{x+1}) \in \bb R^3\,;\, \omega_{x-1}^2 + \omega_{x}^2+\omega_{x+1}^2 =1, \;  \omega_{x-1} + \omega_{x}+\omega_{x+1} =0\}$ while $\mc Y_x$ is the generator of a Brownian motion on the circle $\{ (\omega_x,\omega_{x+1}) \in \bb R^2\,;\,\omega_{x}^2+\omega_{x+1}^2 =1\}$.}}

We call \textit{energy} the formal quantity $\sum_{x} [\omega_x]^2$ and \textit{volume} the formal quantity $\sum_{x}\omega_x$. The Liouville operator ${\mc A}$ as well as the noise $\mc S_1$ conserves both energy and volume, while the operator $\mc S_2$ conserves only energy. 
We assume that the strength of the second noise scales as\begin{equation} \label{eq:gamma} \gamma_n = \tfrac{a}{n}\end{equation} for some $a >0$. We emphasize that one could easily treat the general case $\gamma_n=\frac{a}{n^b}$, $b\ge 0$, as in \cite{BGJSS}, using the same methods as in this paper, but we chose here to focus on the most interesting case $b=1$ where the interpolation happens. 

 The Markov process generated by the accelerated operator $n^{3/2} \mc L_n$ is denoted by $\omega^n(t) = \{\omega_x^n(t)\}_{x \in \bb Z}$. This diffusion has a family $\{\mu_\beta\; ;\; \beta >0\}$ of 
%non-equilibrium stationary states (NESS)\notema{They are equilibrium states, no?} 
 invariant measures given by the \textit{Gibbs homogeneous product measures}
\[
\mu_\beta(d\omega) = \prod_{x \in \bb Z} \sqrt{\tfrac{\beta}{2\pi}} \exp\big(- \tfrac{\beta \omega_x^2}{2}\big) \;d\omega_x.
\] 
Here $\beta$ represents the inverse temperature, and we denote by $\langle \varphi \rangle_\beta$ the average of $\varphi: \Omega \to \bb R$ with respect to $\mu_\beta$. 

The law of the process $\{\omega^n_x(t)\; ;  \; t\geqslant 0\}_{x\in\bb Z}$ starting from the invariant measure $\mu_\beta$ is denoted by $\bb P_\beta$, and the expectation with respect to $\bb P_\beta$ is denoted by  $\bb E_\beta$. Note that under $\mu_\beta$, the averaged energy per site equals $\langle \omega_x^2\rangle_\beta=\beta^{-1}$, and the averaged volume per site equals $\langle \omega_x\rangle_\beta=0$. 

\subsection{Fluctuation fields}\label{ssec:ff}

From now on, the Markov process $\{\omega_x^n(t)\; ; \; t \geqslant 0\}_{x\in\bb Z}$ is considered starting from $\mu_\beta$.
The {\it energy fluctuation field}  is defined as the distribution-valued process $\mc E_t^n$ given by
\begin{equation}\label{eq:ener}
\mc E_t^n(\varphi)  = \tfrac{1}{\sqrt n} \sum_{x \in \bb Z} \big(\big[\omega_x^n(t)\big]^2 - \beta^{-1}\big) \varphi\big(\tfrac{x}{n} \big)
\end{equation}
for any $ \varphi: \bb R \to \bb R$ in the usual Schwartz space $\mc S(\bb R)$ of test functions. For fixed $t$ and $ \varphi$, the random variables $\mc E_t^n( \varphi)$ satisfy a central limit theorem: they converge to a centered normal random variable of variance $2 \beta^{-2} \| \varphi\|_{2}^2$, where $\|\cdot\|_{2}$ denotes the usual norm of the Hilbert space $\bb L^2(\bb R)$.

Our main goal is to obtain a convergence result for the $\mc S'(\bb R)$-valued process $\{\mc E_t^n\; ; \; t\geqslant 0\}$. It turns out that the analysis of the {\it correlation field} 
\begin{equation}
\tfrac{1}{n^{3/4}} \sum_{x, y \in \bb Z} \big( \omega_x^n(t) \omega_y^n(t) - \delta_{x,y}\;\beta^{-1} \big) f\big( \tfrac{x+y}{2n}, \tfrac{|y-x|}{\sqrt n}\big)\label{eq:corr}
\end{equation}
will play a fundamental role on the derivation of the scaling limit of $\mc E_t^n$. Recall that $\delta_{x,y}$ is the  indicator function that equals 1 if $x=y$ and 0 otherwise, and  $f: \bb R \times \bb R_+ \to \bb R$ is a smooth function. The non-isotropic scaling is crucial in order to see the scaling limit of $\mc E_t^n$.

%A deep investigation will reveal an operator $\mc L$ which somehow stands for an intermediate between the usual diffusion operator $\Delta$ (standard Laplacian) and the fractional superdiffusion operator $-(-\Delta)^{3/4}$. It is defined in the next paragraphs. 
%
%\subsection{Topology}
% We first need to introduce suitable topological spaces, as in \cite{BGJ,gj2015}.  Let us define the Hermite functions $h_\ell:\bb R\to\bb R$ for $\ell \in \bb N$ as 
%\[
%h_\ell(x):=\frac{(-1)^\ell}{(\ell ! \sqrt{2\pi})^{1/2}}\;  e^{\frac{x^2}{4}} \; \frac{d^\ell}{dx^\ell}\big(e^{-\frac{x^2}2}\big), \quad x\in\bb R.
%\]
%The Hermite functions $\{h_\ell\; ; \; \ell \in \bb N\}$ form an orthonormal basis of $\bb L^2(\bb R)$. For $k>\frac{19}{24}$, we consider the Sobolev space ${\mc H}_{-k}$ of distributions $\mc E$ on $\bb R$ such that they have finite norm 
%\[
%\|\mc E\|_{\mc H_{-k}}^2=\sum_{\ell\in\bb N} (1+\ell)^{-2k} \big|\mc E(h_\ell)\big|.
%\]
%Notice that $\mc H_{-k}$ is the dual topological space of $\mc H_k$, defined as the completion of $\mc S(\bb R)$ with respect to the norm
%\[
%\|\varphi\|^2_{\mc H_k} = \sum_{\ell \in \bb N} (1+\ell)^{2k} \Big|\int h_\ell(x) \varphi(x) \; dx\Big|^2.
%\]

\subsection{Generalized Ornstein-Uhlenbeck equation associated to a L\'evy process} \label{sec:ou} 

First of all, let us introduce some notations: for any complex number $z \in \bb C$, we denote by $\sqrt{z}$ its principal square root, which has positive real part: if $z=re^{i\theta}$ with $r\geqslant 0$ and $\theta\in(-\pi,\pi]$, then its principal square root is $\sqrt{z}=\sqrt{r}e^{i\theta/2}$. Let also $\widehat{\psi}:\bb R \to \bb C$ be the Fourier transform of a function  $\psi \in {\bb L}^1 (\bb R)$, which is defined by
\begin{equation}
\label{eq:fourier}
\widehat{\psi}(k):=\int_{\bb R} e^{-2i\pi uk} \; \psi(u)\; du, \qquad k \in \bb R.
\end{equation}
For any $\varphi\in\mc S(\bb R)$, we define $\mathcal L \varphi$ via the action of the operator $\mc L$ on Schwartz spaces: precisely, the operator $\mc L$ acts on the Fourier transform of $\varphi$ as:
\begin{equation}
\label{eq:fourierL}
\widehat{\mc L\varphi}(k)=\frac{1}{2\sqrt{3\lambda}}\frac{(2i\pi k)^2}{\sqrt{a+i\pi k}} \widehat{\varphi}(k), \qquad k \in \bb R.
\end{equation}
This operator has nice properties, stated in the next proposition:

\begin{proposition}
 \label{prop:levy}
The operator $\mathcal L$ is the generator of a L\'evy process. It leaves the space ${\mc S} (\bb R)$  invariant, and its L\'evy-Khintchine representation  is given by
\begin{equation}
\label{eq:levy-kh}
(\mc L\varphi)(u)= \int_{\bb R} \Big[\varphi(u-y)-\varphi(u)+y\varphi'(u)\Big] \Pi_a(dy),
\end{equation} 
where $\Pi_a$ is the measure on $\bb R$ defined by
\begin{equation} \label{eq:pia}
\Pi_a(dy)=-\frac{4a^{5/2}}{\sqrt{6\lambda\pi}} \; e^{-2ay}\bigg[ \frac{3}{16(ay)^{5/2}} + \frac{1}{2(ay)^{3/2}} + \frac{1}{(ay)^{1/2}}\bigg] \mathbf{1}_{(0,+\infty)}(y) .
\end{equation}
\end{proposition}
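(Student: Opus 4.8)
The plan is to verify directly that the symbol in \eqref{eq:fourierL} is of L\'evy-Khintchine type, then identify the L\'evy measure by an explicit Fourier inversion. Write the multiplier as $\psi(k) = \frac{1}{2\sqrt{3\lambda}} \frac{(2i\pi k)^2}{\sqrt{a+i\pi k}}$. To show $\mc L$ generates a L\'evy process it suffices to check that $\psi$ is a \emph{continuous, negative-definite} function with $\psi(0)=0$, or equivalently (since we expect a pure-jump process with nonzero drift) that $-\psi$ admits a representation $-\psi(k) = \int_{\bb R}\bigl(e^{2i\pi ky} - 1 - 2i\pi ky\bigr)\,\Pi_a(dy)$ for a positive measure $\Pi_a$ on $(0,\infty)$ with $\int (y^2\wedge y)\,\Pi_a(dy)<\infty$; note the first-order correction is kept (rather than truncated) precisely because $\int_{(0,1)} y\,\Pi_a(dy)<\infty$ here, so no compensator cut-off is needed. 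Taking the Fourier convention \eqref{eq:fourier} into account, \eqref{eq:levy-kh} is exactly the inverse-Fourier statement $\widehat{\mc L\varphi}(k) = \psi(k)\widehat\varphi(k)$ with $\psi(k)=\int(e^{2i\pi ky}-1+2i\pi ky\,(-1))\cdots$; I would be careful with the sign of the $y\varphi'(u)$ term, which corresponds to $+2i\pi k$ in Fourier and matches the drift of a spectrally-positive process.

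The computational heart is to exhibit $\Pi_a$. First I would factor out the asymptotically dominant behaviour: as $k\to\infty$, $\psi(k)\sim -\frac{(2\pi)^2 k^2}{2\sqrt{3\lambda}\sqrt{i\pi k}}$, which is (up to constants) the symbol of the $3/2$-stable spectrally positive L\'evy process — consistent with the superdiffusive scaling $tn^{3/2}$ and the $a\to 0$ statement in the introduction. The parameter $a$ plays the role of an exponential tempering at rate $2a$, which is why $\Pi_a$ in \eqref{eq:pia} carries the factor $e^{-2ay}$. Concretely, I would compute $\int_0^\infty (e^{2i\pi ky}-1-2i\pi ky)\, y^{-s} e^{-2ay}\,dy$ for $s=5/2,3/2,1/2$ using the Gamma-function identity $\int_0^\infty y^{\alpha-1}e^{-cy}\,dy = \Gamma(\alpha)c^{-\alpha}$ (analytically continued, with $c = 2a - 2i\pi k$ so that $\mathrm{Re}\,c>0$), together with the two subtractions implemented by differentiating this identity in $c$. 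Collecting the three terms with the coefficients $\tfrac{3}{16}a^{-5/2}, \tfrac12 a^{-3/2}, a^{-1}$ should, after the dust settles, reconstruct $c^{1/2}$-type powers and reproduce $\psi(k)$ — the key algebraic fact being that $\frac{(2i\pi k)^2}{\sqrt{a+i\pi k}}$ can be written as a linear combination of $(2a-2i\pi k)^{1/2}$, $(2a-2i\pi k)^{-1/2}$, $(2a-2i\pi k)^{-3/2}$ modulo the polynomial pieces $1, k$ that get absorbed by the subtractions. Positivity of $\Pi_a$ on $(0,\infty)$ is then manifest from \eqref{eq:pia} since each bracketed term is positive and the overall sign makes the density positive. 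Invariance of $\mc S(\bb R)$ is immediate once we know $\psi$ is smooth away from $0$ with at most polynomial growth and, more carefully, that $\psi(k)$ and all its derivatives are controlled near $k=0$ — here $\psi(k) = O(k^2)$ and is real-analytic in a neighbourhood of the origin since $a>0$ keeps $a+i\pi k$ away from the branch cut, so $\widehat{\mc L\varphi} = \psi\,\widehat\varphi$ stays Schwartz.

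The main obstacle I anticipate is purely bookkeeping: matching the branch of the square root in \eqref{eq:fourierL} (principal root, positive real part, as fixed in the paragraph before Proposition \ref{prop:levy}) with the branch produced by the Gamma-integral $c^{-\alpha}$ with $c = 2a - 2i\pi k$, and tracking the factors of $2\pi$, $\pi$, and $i$ through the Fourier convention \eqref{eq:fourier}. A clean way to sidestep sign ambiguities is to verify the identity first for $k$ purely such that $c>0$ real (i.e. formally $k = i\cdot(\text{real})$, or better: prove the equality of the two analytic functions of $c$ on the half-line $c\in(0,\infty)$ where everything is real and positive, then extend by analytic continuation to $\mathrm{Re}\,c > 0$). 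Once the identity is checked on that ray, uniqueness of analytic continuation gives it for all $k\in\bb R$, and the L\'evy-Khintchine form \eqref{eq:levy-kh} together with $\int_{(0,\infty)}(y^2\wedge 1)\Pi_a(dy)<\infty$ (finite because near $0$ the density behaves like $y^{-5/2}$, integrable against $y^2$, and near $\infty$ it decays exponentially) completes the proof that $\mc L$ is a L\'evy generator.
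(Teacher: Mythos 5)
Your strategy is sound but genuinely different from the paper's. You verify the L\'evy--Khintchine identity by brute force: split the candidate density into the three power-law pieces $y^{-5/2}e^{-2ay}$, $y^{-3/2}e^{-2ay}$, $y^{-1/2}e^{-2ay}$, evaluate each compensated exponential integral via the analytically continued Gamma identity in the variable $c=2a\pm 2i\pi k$, and match powers of $c$ against the symbol. This does close: with $c=2a+2i\pi k$ one has $a+i\pi k=c/2$ and $(2i\pi k)^2=(c-2a)^2$, whence $\tfrac{(2i\pi k)^2}{\sqrt{a+i\pi k}}=\sqrt{2}\,\big(c^{3/2}-4ac^{1/2}+4a^2c^{-1/2}\big)$; note the exponents produced by the three terms are $3/2,\,1/2,\,-1/2$ (from $\Gamma(-3/2)c^{3/2}$, $\Gamma(-1/2)c^{1/2}$, $\Gamma(1/2)c^{-1/2}$), not $1/2,\,-1/2,\,-3/2$ as you wrote --- a harmless slip you would catch in the computation, and the affine-in-$k$ leftovers are automatically consistent because both sides vanish to second order at $k=0$. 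The paper instead recognizes $t^2/\sqrt{1-it}$ as $t^2$ times the characteristic function of a $\Gamma(\tfrac12,1)$ variable $X$ and converts the factor $t^2$ into two integrations by parts against the density $f_X$, so that the L\'evy density appears directly as a rescaling of $f_X''$; this avoids analytic continuation entirely and makes positivity of the density a one-line computation. Your route is the standard recipe for any tempered-stable symbol; theirs is shorter and more transparent here.

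The one point where your write-up is wrong rather than merely sketchy is the positivity claim. You assert that positivity of $\Pi_a$ is manifest from \eqref{eq:pia} because ``the overall sign makes the density positive.'' It does not: the bracket in \eqref{eq:pia} is positive and the prefactor $-4a^{5/2}/\sqrt{6\lambda\pi}$ is negative, so the density as printed is negative, and a negative measure cannot be a L\'evy measure. In fact \eqref{eq:pia} carries a sign error inherited from Lemma \ref{lem:identity}: performing the two integrations by parts carefully gives $\int_0^{+\infty}(e^{itx}-1-itx)f_X''(x)\,dx=(it)^2\Phi_X(t)=-H(t)$, not $+H(t)$ (consistency check: the real part of the left-hand side is $\int(\cos(tx)-1)f_X''\,dx\le 0$, while $\mathrm{Re}\,H(t)>0$). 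The correct L\'evy measure is therefore \eqref{eq:pia} with the opposite, positive, sign. Your Gamma-integral computation, carried to the end, would produce this positive density and expose the discrepancy; declaring positivity readable off the printed formula short-circuits exactly the step that needs care. The remaining points --- the $O(k^2)$ behaviour at the origin, smoothness and polynomial growth of the symbol for Schwartz invariance, and the integrability $\int(y^2\wedge y)\,\Pi_a(dy)<\infty$ justifying the uncut compensator --- are handled correctly.
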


\begin{proof}
For the sake of readability, we postpone this proof to Appendix \ref{app:levy}.
\end{proof}

Let us give here an alternative definition of $\mc L \varphi$, which will turn out to be more tractable in the forthcoming computations. We claim that $\mc L\varphi$ can equivalently be defined as follows: for any $u \in \mathbb R,$
\begin{equation} \label{eq:deriv}
( \mathcal L \varphi ) (u) = -2 \partial_u f(u,0), 
\end{equation}
where $f: \mathbb R \times \mathbb R_+ \to \mathbb R$ is the function such that its Fourier transform with respect to its first variable:
\[
F_k(v) := \int_{\mathbb R} e^{-2i \pi uk} f(u,v) du, \quad k \in \mathbb R, v \geq 0,
\]
is given by
\begin{equation} \label{FToff}
F_k(v) = - \frac{1}{4 \sqrt{3 \lambda}} \frac{(2 i \pi k) \widehat{\varphi}(k)}{\sqrt{a+i \pi k}} \exp \Big( -\sqrt{\frac{a+ i \pi k}{3 \lambda}}v \Big), \quad v \geq 0.
\end{equation}
The function $f$ defined in this way satisfies the integrability conditions
\begin{equation}
 \int_{\bb R\times \bb R^+} f^2 (u, v)\; du dv < \infty \quad \textrm{and} \quad  \int_{\bb R\times \bb R^+} \partial_vf^2 (u, v) \;dudv < \infty. 
 \label{eq:integra}
 \end{equation}
Moreover the function $f$ is solution of the Laplace equation
\begin{equation}
\begin{cases}
\big(6\lambda \partial_{vv}^2 f - \partial_u f - 2 a f\big)(u,v) =0, &\quad \text{ for } u\in\bb R, v >0,\\
12 \lambda \partial_v f(u,0) =  \varphi'(u), &\quad \text{ for } u \in \bb R.
\end{cases}
\label{eq:laplacepde}
\end{equation}
This last claim is proved  in Appendix \ref{app:alter}.

\bigskip

 Let $\mc L^\star$ be the adjoint of $\mc L$ in $\bb L^2(\bb R)$ and $\mc S:=\frac{1}{2}(\mc L+\mc L^\star)$ be its symmetric part.
Let us fix a time horizon $T>0$. We are going  to explain the meaning of a \textit{stationary solution of the infinite dimensional Ornstein-Uhlenbeck equation driven by} $\mc L$, written as follows:
\begin{equation} \label{eq:OU}
\partial_t\mc E_t=\mc L^\star \mc E_t  + \sqrt{2\beta^{-2} (-\mc S)}\; \mc W_t,
\end{equation}
where $\{\mc W_t\; ; \; t \in [0,T]\}$ is a $\mc S'(\bb R)$-valued space-time white noise.

\begin{definition}
We say that an $\mc S'(\bb R)$-valued process $\{\mc E_t\; ; \; t \in [0,T]\}$ is $\beta$-\emph{stationary} if, for any $t\in[0,T]$, the $\mc S'(\bb R)$-valued random variable $\mc E_t$ is a white noise (in space) of variance $2\beta^{-2}$, namely:  for any $\varphi \in \mc S(\bb R)$, the real-valued random variable $\mc E_t(\varphi)$ has a normal distribution of mean zero and variance $2\beta^{-2}\|\varphi\|_{2}^2$.
\end{definition}

\begin{definition}\label{def:solOU} 
We say that the  $\mc S'(\bb R)$-valued process $\{\mc E_t\;;\; t\in[0,T]\}$ is a \emph{stationary solution of} \eqref{eq:OU} if: \begin{enumerate}
\item $\{\mc E_t\; ; \; t \in [0,T]\}$ is $\beta$-stationary;
\item for any {\color{black}time differentiable function  $\varphi:[0,T]\times \bb R \to \bb R$, such that for each $t\in[0,T]$ both $\varphi_t$ and $\partial_t\varphi_t$ belong to $\mc S(\bb R)$,} the process
\[
\mc E_t(\varphi_t) - \mc E_0(\varphi_0) - \int_0^t \mc E_s\big((\partial_s+\mc L)\varphi_s\big)\; ds
\]
is a continuous martingale of quadratic variation
\[
2\beta^{-2} \int_0^t \int_{\bb R} \varphi_s(u) (- \mc S \varphi_s)(u) \; duds.
\]
\end{enumerate}
\end{definition}
%
%
%\begin{remark} In particular, one can easily compute the following time covariances:
%\[
%\big\langle \mc E_t(\varphi) \mc E_0(\psi)\big\rangle = \frac{1}{\beta^2 \sqrt{4\pi t}} \int_{\bb R \times \bb R} \varphi(u)\psi(v) P_t(u-v) \; dudv,
%\]
%where $\{P_t(x)\; ; \; (x,t)\in\bb R \times \bb R_+\}$ is the fundamental solution to  $\partial_t f = - \mc L(f).$
%\end{remark}

Thanks to the fact that $\mc L$ is the generator of a L\'evy process, the same argument used in \cite[Appendix B]{gj2015} can be worked out here to prove the uniqueness of such solutions:

\begin{proposition}[\cite{gj2015}] \label{prop:uniq}
Two stationary solutions of \eqref{eq:OU} have the same distribution.
\end{proposition}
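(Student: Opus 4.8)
The plan is to reproduce the argument of \cite[Appendix B]{gj2015}, observing that it relies only on two structural features of the driving operator — the duality between $\mc L$ and its adjoint $\mc L^\star$, and the dissipativity $\mc S\le0$ — so that the nonlocal (L\'evy) nature of $\mc L$ occurring here causes no essential difficulty. Concretely, the aim is to prove that any stationary solution $\{\mc E_t\;;\;t\in[0,T]\}$ of \eqref{eq:OU} is a Gaussian process whose finite-dimensional distributions depend only on $\beta$ and on $\mc L$. Once this is established the proposition follows: the Borel $\sigma$-algebra of $\mc S'(\bb R)$ (with its weak-$\star$ topology) is generated by the evaluation maps $\mc E\mapsto\mc E(\varphi)$, so the law of an $\mc S'(\bb R)$-valued process is determined by its finite-dimensional distributions, and the latter, being jointly Gaussian, are themselves determined as soon as the law of every finite linear combination $\sum_{j}\mc E_{t_j}(\psi_j)$ is known (Cram\'er--Wold).

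First I would set up the semigroup generated by $\mc L$. By Proposition \ref{prop:levy}, $\mc L$ generates a L\'evy process and maps $\mc S(\bb R)$ into itself; moreover, from the explicit symbol \eqref{eq:fourierL} one sees that $\widehat{\mc L}(k)$ has nonpositive real part and that $\widehat{\mc L}$ and all its derivatives grow at most polynomially in $k$. Hence $T_t:=e^{t\mc L}$, acting in Fourier variables as multiplication by $e^{t\widehat{\mc L}(k)}$, defines a contraction semigroup on $\bb L^2(\bb R)$ that preserves $\mc S(\bb R)$ and satisfies $\partial_t T_t\varphi=\mc L T_t\varphi=T_t\mc L\varphi\in\mc S(\bb R)$ for every $\varphi\in\mc S(\bb R)$ and $t\ge0$. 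I would also record the elementary identity $\int_{\bb R}\varphi\,(\mc L\varphi)=\int_{\bb R}\varphi\,(\mc S\varphi)=-\|\sqrt{-\mc S}\,\varphi\|_{2}^{2}\le0$, which is exactly the compatibility condition underlying the $\beta$-stationarity imposed in \eqref{eq:OU}.

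The heart of the argument is then as follows. Fix $0=t_0<t_1<\dots<t_k\le T$ and $\psi_1,\dots,\psi_k\in\mc S(\bb R)$, set $\Lambda:=\sum_{j=1}^{k}\mc E_{t_j}(\psi_j)$, and introduce the time-dependent test function $\varphi_r:=\sum_{j\,:\,t_j\ge r}T_{t_j-r}\psi_j$ for $r\in[0,t_k]$; it takes values in $\mc S(\bb R)$, is smooth in time and satisfies $(\partial_r+\mc L)\varphi_r=0$ on each interval $(t_{j-1},t_j)$, and has a downward jump of size $\psi_j$ at each $t_j$ with $1\le j\le k-1$. Desingularizing each jump by a steep smooth transition — conveniently done on the Fourier side, by multiplying the $j$-th summand by a smooth monotone cutoff $\rho_\varepsilon(t_j-r)$ — then applying the martingale problem of Definition \ref{def:solOU} to the resulting admissible test functions and letting $\varepsilon\downarrow0$, a telescoping over the successive intervals yields, with $\varphi_0:=\sum_{j=1}^{k}T_{t_j}\psi_j$ (the value of $\varphi_r$ at $r=0$),
\[
\Lambda=\mc E_0(\varphi_0)+M_{t_k},
\]
where $\{M_t\}$ is a continuous martingale for the natural filtration $\{\mc F_t\}$ of the solution, with \emph{deterministic} quadratic variation $\langle M\rangle_{t_k}=2\beta^{-2}\int_0^{t_k}\|\sqrt{-\mc S}\,\varphi_r\|_{2}^{2}\,dr$: on the smooth pieces the drift term of the martingale problem vanishes, the smoothed jump at $t_j$ contributes $-\mc E_{t_j}(\psi_j)$ in the limit, and the shrinking jump regions are negligible for the quadratic variation. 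Since a continuous martingale with deterministic quadratic variation has conditional characteristic function $\bb E[\,e^{i\theta M_{t_k}}\mid\mc F_0\,]=\exp(-\tfrac12\theta^2\langle M\rangle_{t_k})$, the variable $M_{t_k}$ is centered Gaussian of variance $\langle M\rangle_{t_k}$ and independent of $\mc F_0$, hence of $\mc E_0(\varphi_0)$; and $\mc E_0(\varphi_0)$ is centered Gaussian of variance $2\beta^{-2}\|\varphi_0\|_{2}^{2}$ by $\beta$-stationarity. Therefore $\Lambda$ is centered Gaussian with variance $2\beta^{-2}\|\varphi_0\|_{2}^{2}+2\beta^{-2}\int_0^{t_k}\|\sqrt{-\mc S}\,\varphi_r\|_{2}^{2}\,dr$, a quantity depending only on $\beta$, the times $t_j$, the functions $\psi_j$, and the operators $T_\bullet$ and $\mc S$ attached to $\mc L$. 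By the reduction above, this proves the proposition.

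The only genuinely technical point is the rigorous execution of this core step: establishing that $\mc L$ generates a semigroup acting continuously on $\mc S(\bb R)$ with the stated properties — this is precisely where Proposition \ref{prop:levy} and the polynomial control of the symbol \eqref{eq:fourierL} enter — and carefully passing to the limit in the martingale problem fed with the piecewise-in-time test function $\varphi_r$, so that the drift converges to $-\sum_{j<k}\mc E_{t_j}(\psi_j)$, the quadratic variations to the announced limit, and the limiting process $\{M_t\}$ is indeed a continuous martingale with the claimed quadratic variation. Apart from these verifications, the proof is the same as that of \cite[Appendix B]{gj2015}.
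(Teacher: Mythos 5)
Your argument is correct and is essentially the paper's own proof: the paper simply invokes the argument of \cite[Appendix B]{gj2015}, which is precisely the semigroup/duality scheme you reconstruct — feed the backward test function $\varphi_r=T_{t-r}\psi$ (suitably smoothed at the intermediate times) into the martingale problem of Definition~\ref{def:solOU}, observe that the resulting continuous martingale has deterministic quadratic variation and is therefore Gaussian and independent of $\mc F_0$, and conclude that the finite-dimensional distributions are Gaussian with covariance determined by $\beta$, $T_\bullet$ and $\mc S$. The only ingredient beyond \cite{gj2015} is the invariance of $\mc S(\bb R)$ under $e^{t\mc L}$, which you correctly extract from Proposition~\ref{prop:levy} and the polynomial bounds on the symbol \eqref{eq:fourierL}.
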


Let us denote by $\mc C([0,T],\mc S'(\bb R))$ the space of continuous functions from $[0,T]$ to $\mc S'(\bb R)$.
Roughly speaking, the main result of this work states that the energy fluctuations described by $\mc E_t^n$ (defined in \eqref{eq:ener}) satisfy an approximate martingale problem, which, in the limit $n\to \infty$ becomes the martingale characterization of the limiting process described in Definition \ref{def:solOU}. It can be precisely formulated as follows: 

\begin{theorem}\label{theo}
The sequence of processes $\{\mc E_t^n\; ; \; t \in [0,T]\}_{n\in\bb N}$ converges in law, as $n\to\infty$, with respect to the weak topology of $\mc C([0,T], \mc S'(\bb R))$, to the stationary solution of the infinite-dimensional Ornstein-Uhlenbeck process given by \eqref{eq:OU}.
\end{theorem}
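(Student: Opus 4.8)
The plan is to follow the classical three-step scheme for convergence in law of fluctuation fields: (i) prove tightness of the sequence $\{\mc E_t^n\}_{n\in\bb N}$ in $\mc C([0,T],\mc S'(\bb R))$; (ii) show that every limit point is a stationary solution of \eqref{eq:OU} in the sense of Definition \ref{def:solOU}; (iii) conclude by the uniqueness statement of Proposition \ref{prop:uniq}. The first input is a Dynkin-type martingale decomposition for $\mc E_t^n(\varphi)$: applying the generator $n^{3/2}\mc L_n$ to the observable $\frac{1}{\sqrt n}\sum_x ([\omega_x^n]^2-\beta^{-1})\varphi(\tfrac xn)$ produces a drift term whose leading part involves the energy current $\omega_x^n(t)\omega_{x+1}^n(t)$, plus a diffusive contribution coming from $\gamma_n\mc S_2 = \tfrac an \mc S_2$, plus a martingale $\mc M_t^n(\varphi)$ whose quadratic variation is explicit. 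Since $\beta$-stationarity of the limit is immediate from the invariance of $\mu_\beta$ and the CLT recalled after \eqref{eq:ener}, the heart of the matter is to handle the current: it is not a function of the energy alone, so the martingale problem for $\mc E_t^n$ is not closed.

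\textbf{The replacement step.} To close the equation I would introduce the correlation field \eqref{eq:corr} and express the current $\omega_x^n\omega_{x+1}^n$ as (essentially) the diagonal value of this field. One writes a second martingale decomposition, now for the correlation field tested against a two-dimensional function $f(\tfrac{x+y}{2n},\tfrac{|y-x|}{\sqrt n})$; the generator acting on the product $\omega_x\omega_y$ off-diagonal yields a discrete approximation of the transport-plus-Laplacian operator $-\partial_x+6\lambda\partial_{vv}^2-2a$ appearing in \eqref{eq:laplacepde}, while the boundary behavior at $v=0$ reproduces the current and feeds back into the energy field. Choosing $f$ to be the solution of the Laplace-type boundary value problem \eqref{eq:laplacepde} associated to a given test function $\varphi$, and using the alternative representation \eqref{eq:deriv}--\eqref{FToff} of $\mc L\varphi$ together with the integrability bounds \eqref{eq:integra}, the drift term of the $\mc E_t^n$ decomposition is rewritten, up to negligible errors, as $\int_0^t \mc E_s^n(\mc L\varphi)\,ds$ plus correction terms controlled in $\bb L^2(\bb P_\beta)$ by the stationarity of $\mu_\beta$ and spectral-gap / Boltzmann-Gibbs type estimates. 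The quadratic variation of the limiting martingale must be identified with $2\beta^{-2}\int_0^t\int_{\bb R}\varphi_s(-\mc S\varphi_s)\,du\,ds$: this comes from computing the carré du champ of $\mc A+\lambda\mc S_1+\tfrac an\mc S_2$ on the energy observable, noting that the $\mc S_2$-part contributes the diffusive piece while the $\mc S_1$-part combined with the replacement produces exactly the symmetric part $\mc S$ of the L\'evy operator, consistently with Proposition \ref{prop:levy}.

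\textbf{Tightness.} For tightness I would use Mitoma's criterion, reducing to tightness of the real-valued processes $\{\mc E_t^n(\varphi)\}$ for each fixed $\varphi\in\mc S(\bb R)$, which follows from the Aldous (or Kolmogorov--Censov) criterion once one controls the modulus of continuity of the drift and of the martingale part uniformly in $n$. The drift is estimated via the bounds obtained in the replacement step (here the $\bb L^2$ integrability \eqref{eq:integra} of $f$ and $\partial_v f$ is essential), and the martingale part via its explicit quadratic variation, which is $\mc O(1)$ uniformly in $n$ and has continuous paths because the noise is Brownian rather than Poissonian — this is precisely the point flagged in the introduction where the continuous-noise assumption is used. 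Continuity of limit points also follows, so the limiting process lives in $\mc C([0,T],\mc S'(\bb R))$.

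\textbf{Main obstacle.} I expect the main difficulty to be the rigorous control of the replacement of the energy current by $\mc L\varphi$ via the correlation field: one must show that the correlation field, run in the hyperbolic time scale $tn$, relaxes fast enough (relative to the superdiffusive scale $tn^{3/2}$ on which the energy evolves) that its contribution may be replaced by its stationary value, and one must quantify all the resulting error terms — boundary layers near the diagonal $v=0$, the non-homogeneous scaling $\tfrac{|y-x|}{\sqrt n}$ versus $\tfrac{x+y}{2n}$, and the interplay between the $\mc S_1$ and $\tfrac an\mc S_2$ contributions that together generate the interpolating operator $\mc L$ with its $a$-dependent symbol $\tfrac{1}{2\sqrt{3\lambda}}\tfrac{(2i\pi k)^2}{\sqrt{a+i\pi k}}$. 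All of this is carried out in Sections \ref{sec:tight}--\ref{sec:charac} and Appendices \ref{app}--\ref{QV_convergence}; the uniqueness input Proposition \ref{prop:uniq} then finishes the argument.
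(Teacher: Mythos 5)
Your proposal follows essentially the same route as the paper: a Dynkin decomposition for the energy field, closure of the current term via the correlation field tested against the solution $f$ of the boundary-value problem \eqref{eq:laplacepde}, tightness by Mitoma's reduction, and uniqueness via Proposition \ref{prop:uniq}. One point of your sketch, however, misidentifies the mechanism and would fail if taken literally: you attribute the limiting quadratic variation $2\beta^{-2}\int\varphi(-\mc S\varphi)\,du$ to ``the carr\'e du champ of $\mc A+\lambda\mc S_1+\tfrac an\mc S_2$ on the energy observable''. That computation yields a quadratic variation of order $n^{-1/2}$, so the energy-field martingale $\mc M^{\mc E}_{t,n}(\varphi)$ \emph{vanishes} in the limit (Lemma \ref{lem:quadvar-energy}). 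The noise in \eqref{eq:OU} is inherited entirely from the martingale $\mc M^{\mc C}_{t,n}(f)$ attached to the \emph{correlation} field: its carr\'e du champ, computed on the quadratic observable $\sum_{x,y}\omega_x\omega_y f^n_{x,y}$, converges to $2t\beta^{-2}\int(8af^2+24\lambda(\partial_v f)^2)\,du\,dv$, which is only then identified with $2t\beta^{-2}\int\varphi(-\mc S\varphi)\,du$ by Parseval (Lemma \ref{lem:quadvar-corr1} and Appendix \ref{app:alter}). This transfer of the noise from the fast two-point field to the slow energy field is precisely why the paper \emph{adds} the two martingale decompositions rather than merely using the correlation field to rewrite the drift.

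Two further steps are missing from your outline. First, the closure is not achieved in a single pass: after summing the two decompositions one is left with a residual current term \eqref{eq:repeat}, of the same form as the original but smaller by a factor $n^{-1/2}$, and the paper must iterate the construction once more with a second corrector $g$ solving \eqref{eq:pde2} before all remaining terms become negligible. Second, the replacement of $\omega_{x-1}\omega_{x+1}$ by $\omega_x\omega_{x+1}$ (Lemma \ref{lem:vanish}) rests on an explicit resolvent ($H_{-1}$-norm) estimate in which $\mc S_2$ is inverted by hand on these observables; the resulting factor $\gamma_n^{-1}=n/a$ is exactly what the time-scale prefactor $n^{3/2}$ can absorb, and this is where the critical scaling $\gamma_n=a/n$ enters the proof. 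A generic appeal to ``spectral-gap / Boltzmann--Gibbs type estimates'' does not by itself exhibit this balance, so these are the concrete computations your argument would still need to supply.
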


The proof of Theorem \ref{theo} follows from two steps:
\begin{enumerate}
\item We prove in Section \ref{sec:tight} that the sequence $\{\mc E_t^n\; ; \; t\in[0,T]\}_{n\in\bb N}$ is tight.
\item We characterize all its limit points in Section \ref{sec:charac} by  means of a martingale problem.
%show that all limit points $\mc Q^\star$ of convergent subsequences of $\{\mc Q_n\}_{n\in\bb N}$ satisfy:
%for any $\varphi\in \mc S(\bb R)$ differentiable, 
%\[\mc M_t(\varphi):=\mc E_t(\varphi)-\mc E_0(\varphi)-\int_0^t \mc E_s(\mc L\varphi)\; ds\]
%and 
%\[\mc N_t(\varphi):=(\mc M_t(\varphi))^2 -2t\beta^{-2}\int_{\bb R} \varphi(u) (-\mc S\varphi)(u) \; du\]
%are $\mathbb{L}^1(\mc Q^\star)$-martingales. 
\end{enumerate}
First, we need to do an investigation of the fluctuation field $\mc E_t^n$, and of the discrete martingale problem that it satisfies. 

\section{Martingale decompositions}\label{sec:martingales}

In this section we fix $ \varphi\in\mc S(\bb R)$.  Let $f:\bb R \times \bb R_+ \to\bb R$ be as in Section \ref{sec:ou}. Let us introduce the time dependent \textit{\color{black}bidimensional field}, defined   as $\mc C_t^n(f):=\mc C(f)(\omega^n(t))$ with 
\[
\mc C(f)(\omega):= \tfrac{1}{n}\;\sum_{x, y \in \bb Z} \big( \omega_x \omega_y - \delta_{x,y}\;\beta^{-1}\big)f_{x,y}^n,\]
 where, for any $x,y \in \bb Z$,
\begin{equation} \label{eq:fxy} f_{x,y}^n:= f\big( \tfrac{x+y}{2n}, \tfrac{|y-x|}{\sqrt n}\big). 
\end{equation}
%Let us denote \[\mc C_t^n(f):=\mc C(f)(\omega^n(t)).\]
Note that, for any sufficiently regular square-integrable function $f$, since under $\mu_{\beta}$ the variables $\{\omega_x\}_{x\in\bb N}$ are independent and centered Gaussian, we have, by an application of the Cauchy-Schwarz inequality, that
\begin{equation}\label{eq:normC}
\bb E_\beta\Big[\big(\mc C_t^n(f)\big)^2\Big] \leqslant \tfrac{C(\beta)}{n^2} \sum_{x,y\in\bb Z} (f_{x,y}^n)^2 \xrightarrow[n\to\infty]{}0.
\end{equation}
\subsection{Martingale decomposition for the energy}\label{ssec:energy2}
We first need to define the two discrete operators  $\nabla_n$ and $\Delta_n$,  acting on $\varphi \in \mc S(\bb R)$ as follows:
for any $x\in \bb Z$ let \[
\nabla_n\varphi(\tfrac{x}{n}):=n\big\{\varphi(\tfrac{x+1}{n})-\varphi(\tfrac{x}{n})\big\},\quad \Delta_n\varphi(\tfrac{x}{n}):=n^2\big\{\varphi(\tfrac{x+1}{n})+\varphi(\tfrac{x-1}{n})-2\varphi(\tfrac{x}{n})\big\}.
\]
%{\color{black}and also
%\[\Delta^{(2)}_n\varphi\big(\tfrac x n\big) = n^2\big\{\varphi(\tfrac{x+2}{n})+\varphi(\tfrac{x-2}{n})-2\varphi(\tfrac x n )\big\}.\]
%}
From Dynkin's formula, see for example  \cite{kl}, for any $\varphi \in \mc S(\bb R)$, the process
\begin{equation}\label{eq:mart1}
\mc M^{\mc E}_{t,n}(\varphi):=\mc E_t^n(\varphi)-\mc E_0^n(\varphi) - \int_0^t n^{3/2}\mc L_n (\mc E_s^n(\varphi))\; ds
\end{equation}
is a martingale. A straightforward computation shows that 
\begin{equation}
n^{3/2} \mc L_n(\mc E_s^n(\varphi))=  -2\sum_{x\in\bb Z} \Big\{\omega_x^n(s)\omega^n_{x+1}(s)\nabla_n\varphi\big(\tfrac{x}{n}\big)\Big\} + {\color{black} \mathcal{R}_s^n(\varphi)}  \label{eq:energy1}
\end{equation}
{\color{black}where 
\begin{align*}
\mathcal{R}_s^n(\varphi)  = & \; (2\gamma_n+4\lambda)\tfrac{1}{n}\sum_{x\in\bb Z} \big([\omega_x^n(s)]^2-\beta^{-1}\big)\Delta_n\varphi\big(\tfrac{x}{n}\big) \\
&  + (2\lambda)\tfrac{1}{n} \sum_{x\in\bb Z} \big([\omega_x^n(s)]^2-\beta^{-1}\big)\Big[n^2\big\{\varphi(\tfrac{x+2}{n})+\varphi(\tfrac{x-2}{n})-2\varphi(\tfrac x n )\big\}\Big] \\
& + (2\lambda)\tfrac{1}{n} \sum_{x\in\bb Z} \omega^n_x(s)\omega^n_{x+2}(s)\Delta_n\varphi\big(\tfrac{x+1}{n}\big) \\
& - (4\lambda)\tfrac{1}{n} \sum_{x\in\bb Z} \omega^n_x(s)\omega^n_{x+1}(s)\Big(\Delta_n\varphi\big(\tfrac{x}{n}\big)+\Delta_n\varphi\big(\tfrac{x+1}{n}\big)\Big).\end{align*}}
The second term in the right hand side of \eqref{eq:energy1}, when integrated in time between $0$ and $t$ -- namely $\int_0^t \mathcal{R}_s^n(\varphi)ds$ -- is negligible in $\bb L^2(\bb P_\beta)$ as a consequence of the Cauchy-Schwarz inequality (recall that $\langle \omega_x \omega_{x+2}\omega_y\omega_{y+2}\rangle_\beta=0$ for $x\neq y$). 
Analogously, the first term in the right hand side \eqref{eq:energy1}, integrated in time, can be replaced thanks to Cauchy-Schwarz inequality, up to a vanishing error in $\bb L^2(\bb P_\beta)$, by
\[  
-2\int_0^t \sum_{x\in\bb Z}\omega_x^n(s)\omega_{x+1}^n(s) \varphi'\big(\tfrac x n \big) ds =   -\int_0^t \sum_{x\in\bb Z} \omega_x^n(s)\omega^n_{x+1}(s)24\lambda \partial_v f\big(\tfrac{x}{n},0\big) ds
\]
the last equality being a consequence of \eqref{eq:laplacepde}.
{\color{black}Therefore, we have
\begin{equation}\label{eq:ddd}\mc E_t^n(\varphi) - \mc E_0^n(\varphi) = -\int_0^t \sum_{x\in\bb Z} \omega_x^n(s)\omega^n_{x+1}(s)24\lambda \partial_v f\big(\tfrac{x}{n},0\big) ds+\mc M_{t,n}^{\mc E}(\varphi)+ \int_0^t\varepsilon_n(s)ds,\end{equation}
where $\mc M_{t,n}^{\mc E}(\varphi)$ is a martingale, whose quadratic variation will be computed in Section \ref{ssec:quadratic}. Moreover, $\varepsilon_n(t)$ satisfies two estimates: first, for any $t>0$ fixed,
\begin{equation}
\label{eq:L2vanish}
\lim_{n\to\infty} \bb E_\beta\Big[\Big(\int_0^t\varepsilon_n(s)ds\Big)^2\Big] = 0
\end{equation} and second,
\begin{equation} \label{eq:supvanish}
\lim_{n\to\infty} \sup_{t\in[0,T]}\bb E_\beta\Big[ \big|\varepsilon_n(t)\big|^2\Big] < +\infty.\end{equation} 
}
\subsection{Martingale decomposition for the correlation field}

Now let us turn to the bidimensional field $\mc C_t^n(f)$. From Dynkin's formula, for any $f:\bb R^2 \to \bb R$, the process
\begin{equation}\label{eq:mart2}
\mc M_{t,n}^{\mc C}(f):=\mc C_t^n(f)-\mc C_0^n(f) - \int_0^t n^{3/2}\mc L_n (\mc C_s^n(f)) \; ds
\end{equation}
is a martingale. The computations of Appendix \ref{app} allow us to write
\begin{align}
n^{3/2}&\mc L_n (\mc C(f))  = - \tfrac{2}{\sqrt n} \ \sum_{x\in\bb Z} \big(\omega_x^2-\beta^{-1}\big)\Big\{ \partial_{u}  f\big(\tfrac{x}{n},0\big) + \mc O\big( \tfrac{1}{\sqrt n} \big) \Big\} +{\mc O} \big(\tfrac{1}{\sqrt n}\big) \label{eq:step0} \\
&  +  \sum_{x\in\bb Z} \omega_x\omega_{x+1} \Big\{  24\lambda \partial_v f\big(\tfrac{x}{n},0\big)+ \mc O\big( \tfrac{1}{n} \big)\Big\} \label{eq:step2} \\
& +  \tfrac{4}{\sqrt n} \ \sum_{x\in\bb Z} \Big[\omega_x\omega_{x+1} \Big\{af\big(\tfrac{x}{n},0\big)\Big\} - \omega_{x+1}\omega_{x-1} \Big\{\lambda \partial^2_{vv}f\big(\tfrac{x}{n},0\big) + \mc O\big( \tfrac{1}{\sqrt n} \big)\Big\} \Big], \label{eq:step}
\end{align}
where $\mc O(\varepsilon_n)$ denotes a sequence of functions in $\bb Z$ bounded by $c\varepsilon_n$ for some finite constant $c$ that does not depend on $n$. {\color{black}Note that \eqref{eq:step2} contains the same term that we made appear above in \eqref{eq:ddd}.}

Observe that w.r.t.~the computations of Appendix \ref{app} an extra term has been introduced (precisely in the first display \eqref{eq:step0}): this term is
$$\tfrac{2\beta^{-1}}{\sqrt n} \sum_{x \in \ZZ} \partial_u  f\big(\tfrac{x}{n},0\big) = -\tfrac{\beta^{-1}}{\sqrt n} \sum_{x \in \ZZ} ({\mc L} \varphi) \big(\tfrac{x}{n}\big)$$ where the last equality follows from \eqref{eq:deriv}. We claim that this new quantity is at most of order $n^{-1/2}$.  To justify this, recall that by Proposition \ref{prop:levy} the function $h={\mc L} \varphi$ is in the Schwartz space and that its integral equals $\int_{\bb R} h(u)du ={\hat h} (0) =0$. Moreover we have
\begin{equation*}
\begin{split}
\bigg| \tfrac{1}{n} \sum_{x \in \bb Z} h\big( \tfrac{x}{n}\big) - \int_{\bb R} h(u) du \bigg|&=\bigg|\sum_{x \in \bb Z} \int_{\frac{x}{n}}^{\frac{x+1}{n}} \Big( h\big( \tfrac{x}{n}\big) - h(u)\Big) du \bigg|=\bigg|\sum_{x \in \bb Z} \int_{\frac{x}{n}}^{\frac{x+1}{n}} h'(u) \big(\tfrac{x+1}{n} - u\big) du \bigg|\\
&\le \tfrac{1}{n} \, \sum_{x \in \bb Z} \int_{\frac{x}{n}}^{\frac{x+1}{n}} |h'(u)| du =\tfrac{1}{n} \int_{\bb R} |h' (u)| du=  {\mc O} (\tfrac{1}{n}).
\end{split}
\end{equation*}
Therefore $ \sum_{x \in \ZZ} h \big(\tfrac{x}{n}\big) ={\mc O} (1)$ and the claim is proved.

Let us go one step further, and replace the local function $\omega_{x-1}\omega_{x+1}$ that appears in \eqref{eq:step} with the local function $\omega_x\omega_{x+1}$. This is the purpose of Lemma \ref{lem:vanish} below: from that result we can rewrite the time integral  as
\begin{align*}
\int_0^t n^{3/2}\mc L_n (\mc C_s^n(f))\; ds  = & - \tfrac{2}{\sqrt n} \int_0^t \sum_{x\in\bb Z} \big([\omega^n_x(s)]^2-\beta^{-1}\big)\; \partial_{u}  f\big(\tfrac{x}{n},0\big) \; ds \\
&  + 24\lambda \int_0^t \sum_{x\in\bb Z} \omega_x^n(s)\omega^n_{x+1}(s) \;  \partial_v f\big(\tfrac{x}{n},0\big)\; ds \\
& + \tfrac{4}{\sqrt n} \int_0^t   \sum_{x\in\bb Z} \omega^n_x(s)\omega^n_{x+1}(s) \big(af - \lambda \partial^2_{vv}f\big)\big(\tfrac{x}{n},0\big) \; ds + {\color{black}\int_0^t\varepsilon'_n(s)ds, }
\end{align*}
{\color{black}where $\varepsilon'_n(t)$ satisfies
the same estimates as $\varepsilon_n(t)$, namely \eqref{eq:L2vanish} and \eqref{eq:supvanish}.} 
%where $ o_\beta(1)$ vanishes, as $n\to\infty$, in $\bb L^2(\bb P_\beta)$. 

\begin{lemma}\label{lem:vanish}
Let $\{\psi_n(x)\}_{x\in\bb Z}$ be a  real-valued sequence such that
\begin{equation}\label{eq:normfinite}
\tfrac{1}{n}\sum_{x\in\bb Z} |\psi_n(x)|^2 < +\infty.
\end{equation} Then,
\begin{equation}\label{eq:vanish2}
\lim_{n\to\infty} \bb E_\beta\Big[\Big(\int_0^t \tfrac{1}{\sqrt n} \sum_{x\in\bb Z} \psi_n(x) (\omega^n_x-\omega^n_{x-1})(s)\omega^n_{x+1}(s)\; ds\Big)^2\Big] =0.
\end{equation}
\end{lemma}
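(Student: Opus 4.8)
The plan is to use the standard Kipnis--Varadhan / $H_{-1}$ inequality for additive functionals of reversible (or stationary) Markov processes in order to bound the time integral in \eqref{eq:vanish2}. Since the process $\omega^n(t)$ is stationary under $\mu_\beta$ and is generated by $n^{3/2}\mc L_n$, for any local function $g$ with $\langle g\rangle_\beta=0$ one has the classical bound
\[
\bb E_\beta\Big[\Big(\int_0^t g(\omega^n(s))\, ds\Big)^2\Big] \le C\, t \, \big\| g \big\|_{-1,n}^2,
\]
where $\|g\|_{-1,n}^2 = \sup_{h}\big\{ 2\langle g h\rangle_\beta - n^{3/2}\langle h (-\mc L_n^{\mathrm{sym}}) h\rangle_\beta\big\}$ and $\mc L_n^{\mathrm{sym}}=\lambda\mc S_1+\gamma_n\mc S_2$ is the symmetric part of the generator (the antisymmetric part $\mc A$ only helps, by the usual sector-condition argument, so one may drop it at the cost of a constant, or keep it and argue as in \cite{kl}). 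Here the relevant function is
\[
g_n(\omega) = \tfrac{1}{\sqrt n}\sum_{x\in\bb Z}\psi_n(x)\,(\omega_x-\omega_{x-1})\,\omega_{x+1},
\]
which indeed has zero average. So the whole statement reduces to showing that $\|g_n\|_{-1,n}^2\to 0$.

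The key observation, which is really the point of the lemma, is that $(\omega_x-\omega_{x-1})\omega_{x+1}$ is, up to a gradient, in the range of the noise generator $\mc S_1$. Concretely one computes $\mc X_x$ applied to a well-chosen quadratic function: acting with $\mc X_x$ on $\omega_{x-1}\omega_{x+1}$ (or a similar nearest-neighbour product) produces exactly a combination of the current-type terms $(\omega_x-\omega_{x-1})\omega_{x+1}$ and $(\omega_{x+1}-\omega_x)\omega_{x-1}$ plus lower-order pieces, so that after summation by parts against $\psi_n$ the function $g_n$ can be written as $\lambda\, \mc S_1 u_n + (\text{gradient remainder})$ for an explicit $u_n = \tfrac{1}{\sqrt n}\sum_x c_x(\psi_n)\,\omega_{x-1}\omega_{x+1}$ with $\langle u_n(-\mc S_1)u_n\rangle_\beta$ controlled by $\tfrac1n\sum_x|\psi_n(x)|^2$ (bounded, by \eqref{eq:normfinite}) times a factor that does not grow. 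Then in the variational formula one takes $h = -u_n$ (or $h=-u_n/n^{3/2}$ to match the time acceleration), Cauchy--Schwarz on the cross term $\langle g_n h\rangle_\beta$, and exploits the extra power of $n$: the time scale $n^{3/2}$ in the denominator of $\|\cdot\|_{-1,n}$ beats the $n^{1/2}$ normalisation in $g_n$, yielding $\|g_n\|_{-1,n}^2 = \mc O(n^{-1})\cdot \tfrac1n\sum_x|\psi_n(x)|^2 \to 0$. The gradient remainder is handled separately and is even smaller, again because it is a discrete gradient summed against an $\ell^2$-bounded sequence, giving an additional $n^{-1}$.

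A cleaner alternative, which avoids inverting $\mc S_1$ by hand, is to work directly in the variational formula: bound $2\langle g_n h\rangle_\beta$ by integrating by parts so that $g_n h$ becomes $\langle (\mc X_\cdot u_n)\, \text{(something)}\rangle_\beta$ and then apply Young's inequality $2ab\le \tfrac1\epsilon a^2 + \epsilon b^2$ with $a$ absorbed into the Dirichlet form $n^{3/2}\lambda\langle h(-\mc S_1)h\rangle_\beta$ and $b^2$ estimated by a Gaussian moment computation under $\mu_\beta$, tracking the factor $\tfrac1n\sum_x|\psi_n(x)|^2$. Either way, the main obstacle is bookkeeping: one must check that replacing $(\omega_x-\omega_{x-1})\omega_{x+1}$ by the image under $\mc X_x$ of a quadratic function really produces only (i) the desired current plus (ii) terms that are themselves currents of the same type (so that after summing against $\psi_n$ one gets a closed estimate) plus (iii) genuinely negligible corrections — this is a finite but slightly delicate computation with the explicit vector fields $\mc X_x$, and one must make sure no term of order $\tfrac1{\sqrt n}\sum_x|\psi_n(x)|$ (which need not be small) is generated. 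Everything else — the Kipnis--Varadhan bound, the Gaussian moment estimates under $\mu_\beta$, and the summation by parts against $\psi_n$ — is routine.
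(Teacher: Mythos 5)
Your opening move coincides with the paper's: both proofs start from the Kipnis--Varadhan bound
$\bb E_\beta[(\int_0^t \Psi(\omega^n(s))ds)^2] \le C t\,\langle \Psi, (t^{-1}-n^{3/2}\mc S)^{-1}\Psi\rangle_\beta$ (the paper cites Lemma 3.9 of \cite{sethu}), after which everything hinges on estimating the $H_{-1}$ norm of $\Psi=\tfrac{1}{\sqrt n}\sum_x\psi_n(x)(\omega_x-\omega_{x-1})\omega_{x+1}$. From there you diverge, and this is where the gap is: you propose to invert the \emph{first} noise $\mc S_1$, claiming that $(\omega_x-\omega_{x-1})\omega_{x+1}$ is ``up to a gradient, in the range of $\mc S_1$'' with a local quadratic preimage. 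You never verify this, and you explicitly flag it as the ``main obstacle'' --- but that obstacle \emph{is} the lemma. The claim is not true in the clean form you need: $\mc S_1=\sum_z\mc X_z\circ\mc X_z$ applied to $\sum_x c_x\,\omega_{x-1}\omega_{x+1}$ spreads over monomials $\omega_{x+i}\omega_{x+j}$ in a window of width about $4$, together with diagonal (carr\'e du champ) contributions, and the ``gradient remainders'' you propose to discard are of exactly the same nature as the current you are trying to control, so the argument as sketched is circular. Inverting $\mc S_1$ on the two-particle sector with a source concentrated near the diagonal is precisely the delicate analysis that produces the $3/4$-fractional scaling in \cite{BGJ}; it cannot be dismissed as routine bookkeeping, and your rate accounting ($\|g_n\|^2_{-1,n}=\mc O(n^{-1})\cdot\tfrac1n\sum|\psi_n|^2$) is asserted rather than derived.

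The paper does the opposite of what you suggest: in the variational formula it \emph{drops} the positive term coming from $\lambda\mc S_1$ and keeps only $\gamma_n\mc S_2$. The point is the exact algebraic identity
$\mc S_2\big(\tfrac14\omega_{x-1}\omega_{x+1}-\tfrac16\omega_x\omega_{x+1}\big)=(\omega_x-\omega_{x-1})\omega_{x+1}$,
which holds because $\omega_x\omega_{x+1}$ and $\omega_{x-1}\omega_{x+1}$ are eigenfunctions of $\mc S_2$ (with eigenvalues $-6$ and $-4$). Hence $(-\gamma_n\mc S_2)^{-1}\Psi$ is explicit and local, giving $\|\Psi\|^2_{z,-1}\le \tfrac{C(\beta)}{\gamma_n n}\sum_x|\psi_n(x)|^2$. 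The price $\gamma_n^{-1}=n/a$ is then exactly affordable: $\tfrac{t}{n^{3/2}}\cdot\tfrac{n}{a}\cdot\tfrac1n\sum_x|\psi_n(x)|^2=\mc O(n^{-1/2})$. This also explains conceptually why the second, volume-destroying noise must enter: it is what makes this replacement possible at all, and the whole interpolation phenomenon of the paper rests on the balance between $\gamma_n^{-1}$ and the time scale $n^{3/2}$. To repair your proof, replace the attempted $\mc S_1$-inversion by this one-line $\mc S_2$ computation.
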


\begin{proof}
To prove the lemma we use a general inequality for the variance of additive functionals of Markov processes: we have
\begin{equation}\label{eq:ineq-1}
\bb E_\beta\Big[\Big(\int_0^t \tfrac{1}{\sqrt n } \sum_{x\in\bb Z} \psi_n(x) (\omega^n_x-\omega^n_{x-1})(s)\omega^n_{x+1}(s)\; ds\Big)^2\Big]  \leqslant C(\beta) \tfrac{t}{n^{3/2}}  \big\|\Psi\big\|^2_{[tn^{3/2}]^{-1},-1} \end{equation}
where 
\[\Psi(\omega):= \tfrac{1}{\sqrt n} \sum_{x\in\bb Z}\psi_n(x)(\omega_x-\omega_{x-1})\omega_{x+1},\]
and, for any $z >0$, 
\begin{align}
\big\|\Psi\big\|^2_{z,-1} & := \big\langle \Psi,\; \big( z-\lambda\mc S_1-\gamma_n\mc S_2\big)^{-1}\; \Psi\big\rangle_\beta   \notag \\
& = \sup_{g} \Big\{2\big\langle \Psi\;  g\big\rangle_\beta - z \big\langle g^2 \big\rangle_\beta-\big\langle g \; (-\lambda\mc S_1-\gamma_n\mc S_2)g\big\rangle_\beta\Big\}, \label{eq:-1norm}
\end{align} where  the supremum is restricted over  functions $g$ in the domain of ${\mathcal S}$. {\color{black}In order to prove \eqref{eq:ineq-1}, we first apply Lemma 3.9 of \cite{sethu}, with the  operator $-t^{-1}{\rm Id}+n^{3/2} \mc L$ and we get: 
\begin{align*}
\bb E_\beta\Big[\Big(\int_0^t  \Psi(\omega^n(s))\; ds\Big)^2\Big]  & \leqslant C(\beta)t \big\langle \Psi, \; (t^{-1}-n^{3/2}\mc L)^{-1}\; \Psi\big\rangle_\beta \vphantom{\bigg(}\\
& = \tfrac{C(\beta)t}{n^{3/2}} \big\langle \Psi, \; ([tn^{3/2}]^{-1}-\mc L)^{-1}\; \Psi\big\rangle_\beta \vphantom{\bigg(}\\
& \leqslant \tfrac{C(\beta)t}{n^{3/2}} \big\langle \Psi, \; ([tn^{3/2}]^{-1}-\mc S)^{-1}\; \Psi\big\rangle_\beta \vphantom{\bigg(}\\
& = \tfrac{C(\beta)t}{n^{3/2}} \big\|\Psi\big\|^2_{[tn^{3/2}]^{-1},-1}.\vphantom{\bigg(}
\end{align*}}
We can forget about the positive operator $(z-\lambda\mc S_1)$, and bound the  norm \eqref{eq:-1norm} as follows:
\[\big\|\Psi\big\|^2_{z,-1} \leqslant  \big\langle \Psi,\; \big(-\gamma_n\mc S_2\big)^{-1}\; \Psi\big\rangle_\beta\; .\]
One can easily check that
\[\mc S_2\big(\tfrac{1}{4}\omega_{x-1}\omega_{x+1} - \tfrac{1}{6} \omega_x\omega_{x+1}\big) = (\omega_x-\omega_{x-1})\omega_{x+1},\]
which implies that $(-\gamma_n \mc S_2)^{-1}\;\Psi$ is explicit and given by
\[\big(-\gamma_n \mc S_2\big)^{-1}\;\Psi(\omega) = \tfrac{1}{\gamma_n\; \sqrt n} \sum_{x\in\bb Z} \psi_n(x) \big[\tfrac{1}{6}\omega_x\omega_{x+1}-\tfrac{1}{4}\omega_{x-1}\omega_{x+1}\big],
\]
so that, finally,
\[\big\|\Psi\big\|^2_{z,-1} \leqslant \tfrac{C(\beta)}{\gamma_n \; n} \sum_{x\in\bb Z} |\psi_n(x)|^2.\]
Recall $\gamma_n=\frac{a}{n}$, and then after replacing the previous bound in \eqref{eq:ineq-1} we get
\[\bb E_\beta\Big[\Big(\int_0^t \tfrac{1}{\sqrt n} \sum_{x\in\bb Z} \psi_n(x) (\omega^n_x-\omega^n_{x-1})(s)\omega^n_{x+1}(s)\; ds\Big)^2\Big]  \leqslant C(\beta)\tfrac{t}{n^{3/2}} \tfrac{n}{a} \tfrac{1}{n} \sum_{x\in\bb Z} |\psi_n(x)|^2 = \mc O\big(\tfrac{1}{\sqrt n}\big),\]
which vanishes as $n\to\infty$.
\end{proof}

\subsection{Sum of the two decompositions} Combining the two decompositions \eqref{eq:mart1} and \eqref{eq:mart2} we get
\begin{align}
\mc E_t^n(\varphi)-\mc E_0^n(\varphi) = & - \int_0^t \tfrac{2}{\sqrt n} \ \sum_{x\in\bb Z} \big([\omega_x^n]^2(s)-\beta^{-1}\big)\; \partial_{u}  f\big(\tfrac{x}{n},0\big) \; ds \label{eq:fraclap}\\
& + \int_0^t \tfrac{4}{\sqrt n} \ \sum_{x\in\bb Z} \omega_x^n(s)\omega^n_{x+1}(s)\; \big(af - \lambda \partial^2_{vv}f\big)\big(\tfrac{x}{n},0\big)\; ds \label{eq:repeat}\\
& + \mc M^{\mc C}_{t,n}(f) - \big(\mc C_t^n(f)-\mc C_0^n(f)\big) + \mc M^{\mc E}_{t,n}(\varphi) +  {\color{black}\int_0^t\varepsilon_n''(s)ds}, \label{eq:martinga}
\end{align}
{\color{black}where $\varepsilon_n''(t)=\varepsilon_n(t)+\varepsilon'(t)$}. 
Note that
\[\int_0^t \tfrac{2}{\sqrt n} \ \sum_{x\in\bb Z} \big([\omega_x^n]^2(s)-\beta^{-1}\big)\; \partial_{u}  f\big(\tfrac{x}{n},0\big) \; ds = 2 \int_0^t \mc E_s^n\big(\partial_u f(\cdot,0)\big)\; ds. \]
Since the terms in (\ref{eq:repeat}) and (\ref{eq:martinga}) will be proved to vanish, as $n\to\infty$, this will permit to close the martingale equation in terms of the energy field. From \eqref{eq:normC}, the term $\big(\mc C_t^n(f)-\mc C_0^n(f)\big) $ vanishes in $\bb L^2(\bb P_\beta)$. Finally, the term \eqref{eq:repeat}, which is in the same form as \eqref{eq:energy1} {\color{black}(but of smaller order, since it is divided by $\sqrt n$)}, is treated by repeating the same procedure: 
 let $g:\bb R \times \bb R_+ \to\bb R$ be solution of the equation 
 \begin{equation}
\left\{ \begin{aligned}
&\big(6\lambda \partial_{vv}^2 g - \partial_u g - 2 a g\big)(u,v) =0,\ \; \ \qquad \text{ for } u\in\bb R, v >0,\\
&24 \lambda \partial_v g(u,0) = 4 \big(af - \lambda \partial^2_{vv}f\big)\big(u,0), \quad \text{ for } u \in \bb R,\end{aligned}\right.
\label{eq:pde2}
\end{equation}
where $f$ is given in Section \ref{sec:ou}. The function $g$ is defined by its Fourier transform w.r.t.~the first variable as it has been done to define $f$. Then, using the same computations as before, but with $\varphi^\prime (u)$ replaced by $2(af -\lambda \partial_{vv}^2 f) (u,0)$,  we get that
\begin{align}
\int_0^t \tfrac{4}{\sqrt n} \ &\sum_{x\in\bb Z} \omega^n_x(s)\omega^n_{x+1}(s)\; \big(af - \lambda \partial^2_{vv}f\big)\big(\tfrac{x}{n},0\big)\; ds  =  \int_0^t \tfrac{24\lambda}{\sqrt n} \sum_{x\in\bb Z} \omega^n_x(s)\omega^n_{x+1}(s)\; \partial_v g\big(\tfrac{x}{n},0\big)\; ds \notag \\
 = & \; \tfrac{1}{\sqrt n}\Big(\mc C_t^n(g)-\mc C_0^n(g) - \mc M^{\mc C}_{t,n}(g)\Big) \label{eq:g1}\\
&  + \int_0^t \tfrac{2}{n} \sum_{x\in\bb Z} \big([\omega_x^n(s)]^2-\beta^{-1}\big)\; \partial_u g\big(\tfrac{x}{n},0\big)\; ds \label{eq:g2}\\
& -\int_0^t \tfrac{4}{n}  \sum_{x\in\bb Z} \omega^n_x(s)\omega^n_{x+1}(s)\; \big(ag - \lambda \partial^2_{vv}g\big)\big(\tfrac{x}{n},0\big)\; ds + {\color{black} \int_0^t\varepsilon_n'''(s)ds}. \label{eq:g3}
\end{align}
Note that in \eqref{eq:g2} we introduced the extra term
\[\tfrac {2\beta^{-1}} n \sum_{x\in\bb Z} \partial_u g\big(\tfrac x n , 0 \big) \] as we did above for $f$. The same argument works here: one can prove that this additional quantity is of order at most $\mc O(\frac 1 n)$ since  $\int_{\bb R} \partial_u g(u,0)du=0$. 

From {\color{black}the Cauchy-Schwarz inequality}, both terms \eqref{eq:g2} and \eqref{eq:g3} vanish in $\bb L^2(\bb P_\beta)$, as $n\to\infty$, and give {\color{black}a contribution $\varepsilon_n'''(t)$ which also satisfies the same conditions as \eqref{eq:L2vanish} and \eqref{eq:supvanish} (note that this is the same argument used in Section \ref{ssec:energy2})}. Besides, {\color{black}from \eqref{eq:normC}}, $ \mc C_t^n(g)-\mc C_0^n(g) $ also vanishes in $\bb L^2(\bb P_\beta)$, as $n\to\infty$. Summarizing, the approximate discrete martingale equation can be written as
\begin{align}
\mc E_t^n(\varphi)-\mc E_0^n(\varphi) = & - 2 \int_0^t \mc E_s^n\big(\partial_u f(\cdot,0)\big)\; ds \notag \\ 
& + \mc M^{\mc E}_{t,n}(\varphi) + \mc M^{\mc C}_{t,n}(f) - \tfrac{1}{\sqrt n} \mc M^{\mc C}_{t,n}(g) + {\color{black}\int_0^t\overline\varepsilon_n(s)ds},  \label{eq:decompo-martingale}
\end{align} 
{\color{black}where $\overline{\varepsilon}_n(t)$ satisfies \eqref{eq:L2vanish} and \eqref{eq:supvanish}.}
In the following paragraph, by computing quadratic variations we prove that the only martingale term that will give a non-zero contribution to the limit is the one coming from the correlation field, namely $\mc M^{\mc C}_{t,n}(f)$.

\subsection{Convergence of quadratic variations}\label{ssec:quadratic}

We start by showing that the quadratic variations of the martingales $\mc M^{\mc E}_{\cdot,n}(\varphi)$, $\mc M^{\mc C}_{\cdot,n}(f)$ and $\mc M^ {\mc C}_{\cdot,n}(g)$ converge in mean, as $n\to\infty$.

\begin{lemma}\label{lem:quadvar-energy}
For any $\varphi \in \mc S(\bb R)$ and $t>0$, 
$$ \lim_{n\to\infty} \bb E_\beta\Big[\big\langle \mc M^{\mc E}_{\cdot,n}(\varphi)\big\rangle_{t}\Big] =0.$$
\end{lemma}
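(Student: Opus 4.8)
The goal is to show that $\mathbb E_\beta[\langle \mc M^{\mc E}_{\cdot,n}(\varphi)\rangle_t] \to 0$. The starting point is the explicit formula for the quadratic variation of $\mc M^{\mc E}_{t,n}(\varphi)$ coming from Dynkin's formula: for a diffusion with generator $n^{3/2}\mc L_n$, one has
\[
\big\langle \mc M^{\mc E}_{\cdot,n}(\varphi)\big\rangle_t = \int_0^t n^{3/2}\Big( \mc L_n\big[(\mc E_s^n(\varphi))^2\big] - 2\,\mc E_s^n(\varphi)\,\mc L_n \mc E_s^n(\varphi)\Big)\, ds = \int_0^t \Gamma_n(\mc E_s^n(\varphi))\, ds,
\]
where $\Gamma_n$ is the carré du champ operator. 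Since $\mc A$ is antisymmetric (a first-order transport operator) it contributes nothing to $\Gamma_n$, so only the noise parts $\lambda\mc S_1$ and $\gamma_n\mc S_2$ survive, and $\Gamma_n = n^{3/2}\big(\lambda\,\Gamma^{\mc S_1}_n + \gamma_n\,\Gamma^{\mc S_2}_n\big)$, each a sum of squares of the form $(\mc X_x \mc E^n(\varphi))^2$ resp. $(\mc Y_x \mc E^n(\varphi))^2$. The first step is thus to compute $\mc X_x$ and $\mc Y_x$ applied to $\mc E^n(\varphi) = \tfrac{1}{\sqrt n}\sum_z (\omega_z^2 - \beta^{-1})\varphi(\tfrac z n)$: these derivatives are explicit and yield, after expansion, quantities of the form $\tfrac1{\sqrt n}\sum$ (discrete-gradient of $\varphi$) $\times$ (quadratic monomials in nearby $\omega$'s), because the sum of squares telescopes against the smoothness of $\varphi$. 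Heuristically $\mc X_x \mc E^n(\varphi) = \mc O(n^{-1}\cdot n^{-1/2})\cdot(\text{monomials})$ and $\mc Y_x\mc E^n(\varphi) = \mc O(n^{-1}\cdot n^{-1/2})\cdot(\text{monomials})$, the extra $n^{-1}$ coming from the discrete gradient $\varphi(\tfrac{x+1}n)-\varphi(\tfrac xn)$ that appears because $\mc X_x,\mc Y_x$ annihilate the pure energy density.

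Second, I would take expectations: since we start from the invariant measure $\mu_\beta$, $\mathbb E_\beta[\Gamma_n(\mc E_s^n(\varphi))] = \langle \Gamma_n(\mc E^n(\varphi))\rangle_\beta$ is independent of $s$, so $\mathbb E_\beta[\langle \mc M^{\mc E}_{\cdot,n}(\varphi)\rangle_t] = t\,\langle \Gamma_n(\mc E^n(\varphi))\rangle_\beta$. It then remains to bound $\langle \Gamma_n(\mc E^n(\varphi))\rangle_\beta$. Using the square-sum structure, $\langle\Gamma_n(\mc E^n(\varphi))\rangle_\beta = n^{3/2}\lambda\sum_x \langle (\mc X_x \mc E^n(\varphi))^2\rangle_\beta + n^{3/2}\gamma_n\sum_x \langle(\mc Y_x\mc E^n(\varphi))^2\rangle_\beta$. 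Each summand is, by the computation above, a fourth moment of Gaussians against $(\nabla_n\varphi)^2$-type weights, hence of order $n^{-1}\cdot n^{-1}$ per site summed over $\mc O(n)$ sites with the gradients localized near the support of $\varphi$; collecting powers gives $n^{3/2}\cdot n^{-1}\cdot \tfrac1n\sum_x (\nabla_n\varphi(\tfrac xn))^2 = \mc O(n^{3/2}\cdot n^{-2}) = \mc O(n^{-1/2})$ for the $\mc S_1$ piece, and for the $\mc S_2$ piece an extra factor $\gamma_n = a/n$ makes it even smaller, $\mc O(n^{-3/2})$. Hence $\langle\Gamma_n(\mc E^n(\varphi))\rangle_\beta = \mc O(n^{-1/2}) \to 0$, which finishes the proof.

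The main technical obstacle is the bookkeeping in the first step: computing $\mc X_x \mc E^n(\varphi)$ and $\mc Y_x\mc E^n(\varphi)$ carefully enough to see the cancellation of the "naive" $\mc O(1)$ term and to extract the genuine order $n^{-1/2}$ (for $\mc X_x$) resp.\ $n^{-1}$ (for $\mc Y_x$) in the discrete gradient of $\varphi$. Concretely, $\mc Y_x(\omega_z^2) = 2\omega_z(\mc Y_x\omega_z)$ is nonzero only for $z\in\{x,x+1\}$ and equals $2\omega_x\omega_{x+1}$ for $z=x$ and $-2\omega_x\omega_{x+1}$ for $z=x+1$, so $\mc Y_x\mc E^n(\varphi) = \tfrac{2}{\sqrt n}\omega_x\omega_{x+1}\big(\varphi(\tfrac xn)-\varphi(\tfrac{x+1}n)\big) = -\tfrac{2}{n^{3/2}}\omega_x\omega_{x+1}\,\nabla_n\varphi(\tfrac xn)$; squaring, taking expectation ($\langle\omega_x^2\omega_{x+1}^2\rangle_\beta = \beta^{-2}$), summing over $x$, and multiplying by $n^{3/2}\gamma_n$ gives $n^{3/2}\cdot\tfrac an\cdot n^{-3}\cdot 4\beta^{-2}\sum_x(\nabla_n\varphi(\tfrac xn))^2 = \mc O(n^{-3/2})$. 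The computation for $\mc X_x$ is analogous but bulkier (three terms in $\mc X_x$, each a first-order operator), and one must check that the leading contributions again carry a discrete gradient of $\varphi$ rather than $\varphi$ itself; once that is granted the same power counting yields $\mc O(n^{-1/2})$. I would defer the fully explicit expansion of $\langle(\mc X_x\mc E^n(\varphi))^2\rangle_\beta$ to an appendix (as the paper does for the analogous correlation-field computations), keeping in the main text only the statement that it is $\mc O(n^{-2})$ uniformly, which suffices for the bound.
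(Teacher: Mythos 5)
Your proposal is correct and follows essentially the same route as the paper: Dynkin's carr\'e du champ formula, the observation that $\mc A$ drops out and that $\mc X_x$, $\mc Y_x$ applied to the energy field produce discrete gradients of $\varphi$ (the paper's explicit formula \eqref{eq:quadener} confirms your claimed structure for the $\mc X_x$ term), stationarity of $\mu_\beta$, and the power count $\sqrt n\cdot n^{-2}\cdot\sum_x(\nabla_n\varphi(\tfrac xn))^2=\mc O(n^{-1/2})$. Your explicit computation of $\mc Y_x\mc E^n(\varphi)$ and the resulting $\mc O(n^{-3/2})$ bound for the $\gamma_n\mc S_2$ piece also match the paper.
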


\begin{proof}
We have 
\begin{align}
\big\langle \mc M^{\mc E}_{\cdot,n}(\varphi)\big\rangle_{t}& =  \tfrac{n^{3/2}}{n}\int_0^t \Big[ \mc L_n (F^2)(\omega^n(s)) - 2 F (\mc L_n F)(\omega^n(s))\Big] \; ds \label{eq:m}\\
 &  = \sqrt n \int_0^t \sum_{z\in\bb Z}\Big[2 \lambda  \big\{\mc X_z(F)\big\}^2 + 2\gamma_n \big\{\mc Y_z(F) \big\}^2\Big] (\omega^n(s))\; ds  \notag
\end{align}
where $F(\omega):=\sum_{x\in\bb Z} \omega_x^2 \; \varphi_x^n$ and $\varphi_x^n:=\varphi(\frac{x}{n})$. {\color{black}Note that \eqref{eq:m} can also be written as
\[
\sqrt n \int_0^t \big(\lambda \mc Q_1(F,F)+\gamma_n \mc Q_2(F,F)\big)(\omega^n(s)),
\]
where the bilinear operators $\mc Q_i$ ($i=1,2$) are given by
\[
\mc Q_i (f,g) = \mc S_i(fg) - f \mc S_i g - g \mc S_i f.
\]
In some contexts, the bilinear form $\mc Q_i$ is called the {\it carr\'e du champ}.}
A long but simple computation {\color{black}(using Appendix \ref{app:carre})} gives that
\begin{align}
\big\langle \mc M^{\mc E}_{\cdot,n}(\varphi)\big\rangle_{t} = \sqrt {n} \int_0^t & \bigg[4\lambda \sum_{x\in\bb Z} {\color{black}\Big(}  \omega_x^n(s)\omega^n_{x+1}(s)(\varphi_{x+1}^n-\varphi_x^n) + \omega_{x}^n(s)\omega^n_{x-1}(s)(\varphi_{x}^n-\varphi_{x-1}^n) \notag \\ 
& \qquad \qquad +\omega_{x-1}^n(s)\omega^n_{x+1}(s)(\varphi_{x-1}^n-\varphi_{x+1}^n) {\color{black}\Big)^2} \notag \\
& + 4\gamma_n \sum_{x\in\bb Z}   {\color{black}\Big(} \omega_x^n(s)\omega_{x+1}^n(s) (\varphi_{x+1}^n-\varphi_x^n) {\color{black}\Big)^2}\bigg] ds. \label{eq:quadener}
\end{align}
Therefore, taking the expectation, since $\varphi \in \mc S(\bb R)$ we get
\[
\bb E_\beta\Big[\big\langle \mc M^{\mc E}_{\cdot,n}(\varphi)\big\rangle_{t}\Big] \leq t C(\beta)  (\lambda+\gamma_n)\;  \tfrac{1}{n^{3/2}} \sum_{z\in\bb Z} \big(\nabla_n\varphi\big(\tfrac{z}{n}\big)\big)^2 = \mc O\big(\tfrac 1 {\sqrt n}\big),\]
which proves the lemma.
\end{proof}

\begin{lemma}\label{lem:quadvar-corr1}
Let $f:\bb R \times \bb R_+ \to \bb R$ be as in Section \ref{sec:ou}. Then, for $t>0$
\[
\lim_{n\to\infty}\bb E_\beta\Big[\big\langle \mc M^{\mc C}_{\cdot,n}(f)\big\rangle_{t}\Big] =
2t\beta^{-2}\int_{\bb R \times \bb R^+} (8af^2+24\lambda(\partial_v f)^2)(u,v) \; dudv. 
\]
Moreover the term on the right hand side of last expression equals to 
\[
2t\beta^{-2}\int_{\bb R} \varphi(u) (-\mc S\varphi)(u) \; du. 
\]
\end{lemma}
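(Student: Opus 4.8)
The statement has two parts, which I would prove separately: the convergence of the expected quadratic variation, and the algebraic identity rewriting the limit as $2t\beta^{-2}\langle\varphi,-\mc S\varphi\rangle$.

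\textbf{The limit of the quadratic variation.} By Dynkin's formula for $\mc C_t^n(f)$ under the accelerated generator $n^{3/2}\mc L_n$, and since $\mc A$ is a derivation contributing no carr\'e du champ, one obtains, exactly as in the proof of Lemma \ref{lem:quadvar-energy},
\[
\langle\mc M^{\mc C}_{\cdot,n}(f)\rangle_t = n^{3/2}\int_0^t\Big(\lambda\,\mc Q_1\big(\mc C(f),\mc C(f)\big) + \gamma_n\,\mc Q_2\big(\mc C(f),\mc C(f)\big)\Big)(\omega^n(s))\,ds .
\]
Since the process starts from the invariant measure $\mu_\beta$, taking $\bb E_\beta$ removes the time integral and leaves a purely static Gaussian computation under $\mu_\beta$. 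Differentiating $\mc C(f)$ gives, with $\Sigma_j := \sum_{y} f_{j,y}^n\,\omega_y$,
\[
\mc X_z\mc C(f) = \tfrac 2n\big[(\omega_{z+1}-\omega_z)\Sigma_{z-1}+(\omega_z-\omega_{z-1})\Sigma_{z+1}+(\omega_{z-1}-\omega_{z+1})\Sigma_z\big], \qquad \mc Y_z\mc C(f) = \tfrac 2n\big(\omega_{z+1}\Sigma_z-\omega_z\Sigma_{z+1}\big).
\]
The structural point, which I would emphasize, is that the two carr\'es du champ contribute at the same order but through different mechanisms. In $\mc X_z\mc C(f)$, replacing each $\Sigma_{z\pm1}$ by the common value $\Sigma_z$ annihilates the bracket, since $(\omega_{z+1}-\omega_z)+(\omega_z-\omega_{z-1})+(\omega_{z-1}-\omega_{z+1})=0$; only the increments $\Sigma_{z\pm1}-\Sigma_z=\sum_y(f_{z\pm1,y}^n-f_{z,y}^n)\omega_y$ survive, and a Taylor expansion in the second variable of $f$ brings out a factor of order $\tfrac1{\sqrt n}\partial_v f(\tfrac zn,\tfrac{|y-z|}{\sqrt n})$. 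In $\mc Y_z\mc C(f)$, by contrast, no such cancellation occurs and the dominant term $\tfrac2n(\omega_{z+1}-\omega_z)\Sigma_z$ carries the full value of $f$. Taking $\mu_\beta$-expectations of the squares via Wick's rule --- using $\langle\omega_x\omega_y\rangle_\beta=\delta_{x,y}\beta^{-1}$ and that the near-diagonal indices $y\in\{z-1,z,z+1\}$ are negligible --- one recognizes $\sum_y(f_{z,y}^n)^2$ and $\sum_y(f_{z\pm1,y}^n-f_{z,y}^n)^2$ as Riemann sums converging respectively to $2\sqrt n\int_0^\infty f^2(\tfrac zn,v)\,dv$ and $\tfrac2{\sqrt n}\int_0^\infty(\partial_v f)^2(\tfrac zn,v)\,dv$. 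Inserting $\gamma_n=\tfrac an$, summing over $z$ (a Riemann sum for $\int_{\bb R}du$), the $\mc Q_2$-part (the $\mc S_2$-noise) contributes $2t\beta^{-2}\cdot 8a\int_{\bb R\times\bb R^+}f^2$ and the $\mc Q_1$-part (the $\mc S_1$-noise) contributes $2t\beta^{-2}\cdot 24\lambda\int_{\bb R\times\bb R^+}(\partial_v f)^2$, whose sum is the announced value. The main obstacle is the control of all subleading terms: the shift in the first variable of $f$, the near-diagonal indices, and above all the cross terms between the main part $\tfrac2n(\omega_{z+1}-\omega_z)\Sigma_z$ and the remainder $-\tfrac2n\omega_z(\Sigma_{z+1}-\Sigma_z)$ of $\mc Y_z\mc C(f)$, which must still vanish after multiplication by $n^{3/2}\gamma_n=a\sqrt n$ and summation over $z$; I would carry these out in Appendix \ref{QV_convergence}.

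\textbf{The algebraic identity.} It remains to show $\int_{\bb R\times\bb R^+}(8af^2+24\lambda(\partial_v f)^2)\,dudv=\int_{\bb R}\varphi(u)(-\mc S\varphi)(u)\,du$. Since $\varphi$ is real and $\mc L$ --- hence $\mc S$ --- preserves real functions, $\int_{\bb R}\varphi(-\mc S\varphi)=\int_{\bb R}\varphi(-\mc L\varphi)$, which by \eqref{eq:deriv} equals $2\int_{\bb R}\varphi(u)\partial_u f(u,0)\,du$. To evaluate the latter I would multiply the bulk equation of \eqref{eq:laplacepde}, namely $6\lambda\partial_{vv}^2 f-\partial_u f-2af=0$, by $f$ and integrate over $\bb R\times\bb R^+$: the term $\int\!\!\int f\partial_u f$ vanishes by integration by parts in $u$, and integrating by parts in $v$ --- the boundary term at $v=+\infty$ being controlled by \eqref{eq:integra} --- gives $\int_0^\infty f\partial_{vv}^2 f\,dv=-f(u,0)\partial_v f(u,0)-\int_0^\infty(\partial_v f)^2\,dv$. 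This produces $-6\lambda\int_{\bb R}f(u,0)\partial_v f(u,0)\,du-6\lambda\int_{\bb R\times\bb R^+}(\partial_v f)^2=2a\int_{\bb R\times\bb R^+}f^2$. Finally, inserting the boundary condition $12\lambda\partial_v f(u,0)=\varphi'(u)$ and integrating by parts once more in $u$ turns $-6\lambda\int f(u,0)\partial_v f(u,0)\,du$ into $\tfrac12\int_{\bb R}\varphi(u)\partial_u f(u,0)\,du$, so that $2\int_{\bb R}\varphi(u)\partial_u f(u,0)\,du=8a\int_{\bb R\times\bb R^+}f^2+24\lambda\int_{\bb R\times\bb R^+}(\partial_v f)^2$, which is exactly the required identity.
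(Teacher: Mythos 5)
Your treatment of the limit of $\bb E_\beta[\langle \mc M^{\mc C}_{\cdot,n}(f)\rangle_t]$ is essentially the paper's proof: the same carr\'e-du-champ identity, the same reduction to a static Gaussian computation by stationarity, and the same identification of the two dominant mechanisms (the cancellation in $\mc X_z\mc C(f)$ leaving only the $\partial_v f$-increments, versus the full value of $f$ surviving in $\mc Y_z\mc C(f)$), followed by Taylor expansion and Riemann sums; your bookkeeping via $\Sigma_j=\sum_y f^n_{j,y}\omega_y$ is a slightly tidier packaging of the paper's explicit splitting of $\mc X_z(F)$ into near-diagonal and off-diagonal terms, and the subleading/cross terms you defer are exactly those handled in Appendix \ref{QV_convergence}.

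Where you genuinely diverge is the second assertion. The paper proves
\[
\int_{\bb R\times\bb R_+}\big(8af^2+24\lambda(\partial_v f)^2\big)\,du\,dv=\int_{\bb R}\varphi\,(-\mc S\varphi)\,du
\]
in Appendix \ref{app:alter} by Parseval: it computes $\int(8a|F_k|^2+24\lambda|F_k'|^2)\,dk\,dv$ explicitly from the closed-form \eqref{FToff} and recognizes the Fourier symbol of $-\mc S$. You instead work entirely in real space: reduce $\int\varphi(-\mc S\varphi)$ to $\int\varphi(-\mc L\varphi)=2\int\varphi(u)\partial_u f(u,0)\,du$ via \eqref{eq:deriv}, then multiply the bulk equation of \eqref{eq:laplacepde} by $f$ and integrate by parts, using the Neumann boundary condition to convert the boundary term into $\tfrac12\int\varphi\,\partial_u f(\cdot,0)$. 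I checked the signs and constants and the computation is correct (the boundary terms at infinity are controlled by \eqref{eq:integra}, and $f$ is real since $F_{-k}=\overline{F_k}$). Your route buys a conceptually cleaner statement --- the identity is the Dirichlet-form/energy identity for the elliptic extension problem, with no need to evaluate $\widehat{\mc S\varphi}$ explicitly --- while the paper's Fourier computation produces the explicit symbol of $\mc S$ as a by-product, which is useful elsewhere. Both are valid proofs of the lemma.
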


\begin{proof} 
As before, we have
\begin{equation}
\label{eq:patmar}
\big\langle \mc M^{\mc C}_{\cdot,n}(f)\big\rangle_{t} = \tfrac{n^{3/2}}{n^2}\int_0^t \sum_{z\in\bb Z} \Big[ 2\lambda  \big\{\mc X_z(F)\big\}^2 + 2\gamma_n \big\{\mc Y_z(F) \big\}^2\Big] (\omega^n(s))\; ds,
\end{equation}
where $F(\omega):=\sum_{x,y\in\bb Z} \omega_x \omega_y \; f_{x,y}^n$ with $f_{x,y}^n$ defined in \eqref{eq:fxy}. Since the computations are a bit longer, we decompose them as follows: first, note that
\begin{align}
(\mc X_z&)(F)=  2\omega_{z+1}\omega_{z-1}\; (-f_{z+1,z+1}^n+f_{z-1,z-1}^n-f_{z-1,z}^n+f_{z,z+1}^n) \label{eq:00} \vphantom{\Big(}\\ 
& + 2\omega_{z}\omega_{z-1}\; (f_{z,z}^n-f_{z-1,z-1}^n-f_{z,z+1}^n+f_{z-1,z+1}^n) \label{eq:01}\vphantom{\Big(}\\ 
& +2\omega_{z}\omega_{z+1}\; (-f_{z,z}^n+f_{z+1,z+1}^n-f_{z-1,z+1}^n+f_{z-1,z}^n) \label{eq:02}\vphantom{\Big(}\\
& + 2\big\{\omega_{z}^2(f_{z,z+1}^n-f_{z-1,z}^n) + \omega_{z+1}^2(f_{z-1,z+1}^n-f_{z,z+1}^n) + \omega_{z-1}^2(f_{z-1,z}^n-f_{z-1,z+1}^n)\big\} \label{eq:03}\vphantom{\Big(}\\
& + 2\sum_{y\notin\{z-1,z,z+1\}} \omega_y \Big\{\omega_z ( f_{z+1,y}^n-f_{z-1,y}^n) + \omega_{z+1}  (f_{z-1,y}^n-f_{z,y}^n) \label{eq:04}\vphantom{\Big(}\\
& \qquad \qquad \qquad \qquad + \omega_{z-1}  ( f_{z,y}^n-f_{z+1,y}^n)\Big\}\vphantom{\Big(} \label{eq:05}.
\end{align}
In the last expression we consider separately two terms: the first expression involving only the coordinates $\omega_{z-1}$, $\omega_z$ and $\omega_{z+1}$ (from \eqref{eq:00} to \eqref{eq:03}) that we denote by (I), and the last remaining sum over $y \notin\{z-1,z,z+1\}$ (namely \eqref{eq:04}--\eqref{eq:05}) that we denote by (II). In order to compute  $\bb E_\beta\big[\big\langle \mc M^{\mc C}_{\cdot,n}(f)\big\rangle_{t}\big] $, we first estimate {\color{black}the $\bb P_\beta$-average of}
\begin{equation}\label{eq:firstpart}
\tfrac{n^{3/2}}{n^2} \int_0^t \sum_{z\in\bb Z} 2\lambda \big\{\mc X_z(F)\big\}^2(\omega^n(s))\: ds
\end{equation}
to which (I) contributes as
\[
\lambda t C(\beta)  \bigg\{   \tfrac{1}{n^{3/2}} \sum_{z\in\bb Z} \Big(\partial_{v}f\big( \tfrac{z}{n},0\big)\Big)^2 +  \tfrac{1}{n^{5/2}} \sum_{z\in\bb Z} \Big(\partial_{u}f\big( \tfrac{z}{n},0\big)\Big)^2\bigg\} + \mc O\big(\tfrac{1}{n^{3/2}}\big), 
\]
therefore it vanishes as $n\to\infty$. 
The  second term (II) is the only contributor to the limit. By using a Taylor expansion (see also \eqref{eq:taylor} below), one has
\begin{align}\label{eq:taylor_exp}
f_{z+1,y}^n-f_{z-1,y}^n & = -\tfrac{2}{\sqrt n} \partial_{v}f\big( \tfrac{z+y}{2n}, \tfrac{|y-z|}{\sqrt n}\big) + \mc O\big(\tfrac{1}{n}\big),\\
f_{z-1,y}^n-f_{z,y}^n & = \tfrac{1}{\sqrt n} \partial_v f\big( \tfrac{z+y}{2n}, \tfrac{|y-z|}{\sqrt n}\big)+ \mc O\big(\tfrac{1}{n}\big).
\end{align}
Therefore, in the estimate of \eqref{eq:firstpart} the second term (II) will contribute as
\[
2\lambda  t \; \langle \omega_0^2 \omega_1^2\rangle_\beta \sum_{z\in\bb Z}\sum_{ y \notin \{z-1,z,z+1\}} \tfrac{24}{n^{3/2}}  \Big( \partial_v f\big( \tfrac{z+y}{2n}, \tfrac{|y-z|}{\sqrt n}\big)\Big)^2 +  \mc O\big(\tfrac{1}{n}\big),
\]
which converges, as $n\to\infty$, to
\begin{equation}\label{eq:sum1}
48\lambda  t \beta^{-2} \int_{\bb R \times \bb R^+} \big( \partial_v f \big)^2 (u,v)\; du dv.
\end{equation}
Let us now take care of the second stochastic noise that appears with $\mc Y_z$. We have:
\begin{align*}
(\mc Y_z)(F)  = & 2\omega_z\omega_{z+1}(f_{z,z}^n - f_{z+1,z+1}^n) - 2(\omega_z^2-\omega_{z+1}^2) f_{z,z+1}^n\\
&-  2\sum_{y\notin\{z,z+1\}} \omega_y (\omega_z f_{z+1,y}^n - \omega_{z+1} f_{z,y}^n).
\end{align*}
 Recall that $\gamma_n = \frac{a}{n}$. One can check that 
 \begin{equation}\label{eq:secondpart}
\bb E_\beta\bigg[\tfrac{n^{3/2}}{n^2} \int_0^t \sum_{z\in\bb Z} 2\gamma_n \big\{\mc Y_z(F)\big\}^2(\omega^n(s))\: ds\bigg]
\end{equation}
 can be rewritten by the translation invariance of $\mu_\beta$ as
 \[
16 a t \; \langle \omega_0^2 \omega_1^2\rangle_\beta\; \tfrac{1}{n^{3/2}}  \sum_{z\in\bb Z}\sum_{ y \notin \{z-1,z,z+1\}}\big(f_{y,z}^n\big)^2 + \mc O\big(\tfrac{1}{n}\big),
 \]
 and it converges, as $n\to\infty$, to
 \begin{equation} \label{eq:sum2}
16a t \beta^{-2} \int_{\bb R \times \bb R^+}   f ^2 (u,v)\; du dv.
 \end{equation}
 As a consequence of \eqref{eq:sum1} and \eqref{eq:sum2} , we have
 \[
 \bb E_\beta\Big[\big\langle \mc M^{\mc C}_{\cdot,n}(f)\big\rangle_{t}\Big] \xrightarrow[n\to\infty]{}
2t\beta^{-2}\int_{\bb R \times \bb R^+} (8af^2+24\lambda(\partial_v f)^2)(u,v) \; dudv. 
 \]
 An explicit resolution of \eqref{eq:laplacepde} via Fourier transforms given in Appendix \ref{app:alter} easily gives
 \[ \int_{\bb R \times \bb R^+} (8af^2+24\lambda(\partial_v f)^2)(u,v) \; dudv = \int_{\bb R} \varphi(u) (-\mc S\varphi)(u) \; du\]
 which is enough to conclude.
\end{proof}

\begin{remark}
We note that by, similar computations to the ones of the previous lemma,  we can prove that 
 \[
 \bb E_\beta\Big[\big\langle \mc M^{\mc C}_{\cdot,n}(g)\big\rangle_{t}\Big] \xrightarrow[n\to\infty]{}
2t\beta^{-2}\int_{\bb R \times \bb R^+} (ag^2+3\lambda(\partial_v g)^2)(u,v) \; dudv,
 \] {\color{black}where $g$ has been defined before as the solution to \eqref{eq:pde2}.}
\end{remark}

\begin{lemma}[$\bb L^2(\bb P_\beta)$ convergence of quadratic variations]\label{strong_convergence_QV}
For $\varphi \in \mc S(\bb R)$  and  $f:\bb R\times \bb R_+\to\bb R$ as in Section \ref{sec:ou}, we have 
\begin{align}
& \lim_{n\to \infty} \bb E_\beta\bigg[\Big( \big\langle \mc M_{\cdot, n}^{\mc E}(\varphi)\big\rangle_t -\bb E_{\beta}\Big[ \big\langle \mc M_{\cdot, n}^{\mc E}(\varphi)\big\rangle_t\Big]\Big)^2\bigg]=0, \label{eq:l2e} \\
& \lim_{n\to \infty}\bb E_\beta\bigg[\Big( \big\langle \mc M_{\cdot, n}^{\mc C}(f)\big\rangle_t - \bb E_{\beta}\Big[ \big\langle \mc M_{\cdot, n}^{\mc C}(f)\big\rangle_t\Big] \Big)^2\bigg]=0, \label{eq:l2}
\end{align}
\end{lemma}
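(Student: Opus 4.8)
The plan is to observe that both identities reduce, by the Cauchy--Schwarz inequality in the time variable together with the stationarity of $\bb P_\beta$, to a pure \emph{equilibrium} variance estimate, so that none of the machinery behind Lemma \ref{lem:vanish} is needed here. Write $\langle \mc M^{\mc E}_{\cdot,n}(\varphi)\rangle_t=\int_0^t h_n^{\mc E}(\omega^n(s))\,ds$ and $\langle \mc M^{\mc C}_{\cdot,n}(f)\rangle_t=\int_0^t h_n^{\mc C}(\omega^n(s))\,ds$, where $h_n^{\mc E}$ and $h_n^{\mc C}$ are the \emph{carr\'e du champ} densities read off from \eqref{eq:quadener} and \eqref{eq:patmar}, respectively; both are degree-four polynomials in the variables $\{\omega_x\}_{x\in\bb Z}$. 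By stationarity $\bb E_\beta[\langle \mc M_{\cdot,n}\rangle_t]=t\langle h_n\rangle_\beta$, so $\langle \mc M_{\cdot,n}\rangle_t-\bb E_\beta[\langle \mc M_{\cdot,n}\rangle_t]=\int_0^t(h_n(\omega^n(s))-\langle h_n\rangle_\beta)\,ds$, and Cauchy--Schwarz in time plus stationarity give
\[
\bb E_\beta\bigg[\Big(\langle \mc M_{\cdot,n}\rangle_t-\bb E_\beta\big[\langle \mc M_{\cdot,n}\rangle_t\big]\Big)^2\bigg]\leqslant t\int_0^t\big\langle\big(h_n-\langle h_n\rangle_\beta\big)^2\big\rangle_\beta\,ds=t^2\,\mathrm{Var}_{\mu_\beta}(h_n).
\]
Hence it suffices to show that $\mathrm{Var}_{\mu_\beta}(h_n^{\mc E})\to 0$ and $\mathrm{Var}_{\mu_\beta}(h_n^{\mc C})\to 0$ as $n\to\infty$.

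Both variances are computed directly via Wick's theorem, since under $\mu_\beta$ the $\omega_x$ are i.i.d.\ centered Gaussians. For the correlation field, write $h_n^{\mc C}=\tfrac{1}{\sqrt n}\sum_{z\in\bb Z}q_z$ with $q_z:=2\lambda\{\mc X_z(F)\}^2+2\gamma_n\{\mc Y_z(F)\}^2$ and $F(\omega)=\sum_{x,y}\omega_x\omega_y f^n_{x,y}$, and split $\mc X_z(F)$ and $\mc Y_z(F)$, exactly as in the proofs of Lemmas \ref{lem:quadvar-energy} and \ref{lem:quadvar-corr1}, into a local part (involving only $\omega_{z-1},\omega_z,\omega_{z+1}$) and a delocalized part of the schematic form $\omega_z\sum_y w^z_y\omega_y$, where the coefficients $w^z_y$ are of order $n^{-1/2}$ when they come from discrete gradients of $f$ (cf.\ \eqref{eq:taylor_exp}) and of order one when they come from the plain values $f^n_{z,y}$ appearing in $\mc Y_z(F)$, the latter being compensated by the extra factor $\gamma_n=a/n$. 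Expanding $\mathrm{Var}_{\mu_\beta}(h_n^{\mc C})=\tfrac1n\sum_{z,z'}\mathrm{Cov}_{\mu_\beta}(q_z,q_{z'})$ by Wick's theorem, the leading connected contractions pair the two delocalized linear factors attached to $z$ and to $z'$, producing a sum $\sum_y w^z_y w^{z'}_y$ that is non-negligible only when $|z-z'|=\mc O(\sqrt n)$ and $z/n$ ranges in a compact set; a Riemann-sum count then gives $\mathrm{Cov}_{\mu_\beta}(q_z,q_{z'})=\mc O(n^{-1})$ on that range and smaller off it, whence $\mathrm{Var}_{\mu_\beta}(h_n^{\mc C})=\tfrac1n\cdot\mc O(n)\cdot\mc O(\sqrt n)\cdot\mc O(n^{-1})=\mc O(n^{-1/2})$. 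The same scheme applied to $h_n^{\mc E}$, using that the coefficients in \eqref{eq:quadener} carry a factor $(\nabla_n\varphi(z/n))^2/n^{3/2}$ and that the summands are finite range in $z$, yields the stronger bound $\mathrm{Var}_{\mu_\beta}(h_n^{\mc E})=\mc O(n^{-2})$; in fact already $\bb E_\beta\big[(\langle\mc M^{\mc E}_{\cdot,n}(\varphi)\rangle_t)^2\big]=\mc O(n^{-2})$, consistently with Lemma \ref{lem:quadvar-energy}.

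The only real work is the second step: carefully expanding $\{\mc X_z(F)\}^2$ and $\{\mc Y_z(F)\}^2$, bounding all the resulting cross-covariances (including the mixed $\mc X$--$\mc Y$ contributions and the diagonal $z=z'$ terms), and checking that every term contributes $o(1)$ to the variance. These computations are elementary but lengthy, and we collect them in Appendix \ref{QV_convergence}; the main point to keep track of is the interplay between the $n^{-1/2}$ factors coming from the non-isotropic scaling of $f$ in \eqref{eq:fxy} and the $\gamma_n=a/n$ prefactor in front of the second noise, which together guarantee that the $\mc O(\sqrt n)$-range of the correlations in $z$ is narrow enough to beat the $\tfrac1n\sum_{z,z'}$ prefactor.
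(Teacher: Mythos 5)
Your plan is correct and follows essentially the same route as the paper's Appendix \ref{QV_convergence}: reduce to the static variance $t^2\,\mathrm{Var}_{\mu_\beta}(h_n)$ by Cauchy--Schwarz in time plus stationarity, then expand by Wick's theorem under $\mu_\beta$, separating local from delocalized contributions and tracking the $n^{-1/2}$ factors from the discrete gradients of $f$ together with the $\gamma_n=a/n$ prefactor. One quantitative slip in your aside: $\bb E_\beta\big[\big(\langle\mc M^{\mc E}_{\cdot,n}(\varphi)\rangle_t\big)^2\big]$ cannot be $\mc O(n^{-2})$, since it dominates $\big(\bb E_\beta\big[\langle\mc M^{\mc E}_{\cdot,n}(\varphi)\rangle_t\big]\big)^2=\Theta(n^{-1})$, and indeed the off-diagonal Wick pairings give $\mathrm{Var}_{\mu_\beta}(h_n^{\mc E})=\mc O(n^{-1})$ rather than $\mc O(n^{-2})$ --- still vanishing, so the conclusion of the lemma is unaffected.
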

\begin{proof}The proof of this lemma is postponed to Appendix \ref{QV_convergence}. 
\end{proof}

\subsection{Conclusion}

From Lemma \ref{lem:quadvar-energy} and the remark above, we know that, for each fixed $t>0$,  the martingales  $\mc M_{t,n}^{\mc E}(\varphi)$ and $\tfrac{1}{\sqrt n}\mc M_{t,n}^{\mc C}(g)$ vanish, as $ n\to\infty$, in $\bb L^2(\bb P_\beta)$. Therefore the non vanishing terms remaining in the right hand side of the decomposition \eqref{eq:decompo-martingale} are
\begin{equation}\label{eq:decompfinal}
 - 2 \int_0^t \mc E_s^n\big(\partial_u f(\cdot,0)\big)\; ds + \mc M^{\mc C}_{t,n}(f) .
\end{equation}

\section{Tightness}\label{sec:tight}

The tightness of the sequence $\{\mc E_t^n\; ; \; t\in[0,T]\}_{n\in\bb N}$ in the space $\mc C([0,T],\mc S'(\bb R))$ is proved by  standard arguments. 

First, Mitoma's criterion \cite{mitoma} reduces the proof of tightness of distribution-valued processes to the proof of tightness  for real-valued processes. Indeed, it is enough to show tightness of the sequence $\{\mc E_t^n(\varphi)\; ; \; t\in[0,T]\}$ for any $\varphi\in\mc S(\bb R)$. According to \eqref{eq:decompo-martingale}, we are reduced to prove that the processes
\[
\{\mc E_0^n(\varphi)\}_{n\in\bb N}, \qquad \bigg\{\int_0^t \mc E_s^n(\partial_u f(\cdot,0))\; ds \; ; \; t\in[0,T]\bigg\}_{n\in\bb N}
\]
are tight, where $f:\bb R\times\bb R_+ \to \bb R$ is solution to \eqref{eq:laplacepde}. We will also prove that the martingales 
\begin{equation}\label{eq:martingales}
\{\mc M_{t,n}^{\mc E}(\varphi)\; ; \; t\in[0,T]\}_{n\in\bb N}, \
 \{\mc M_{t,n}^{\mc C}(f)\; ; \; t\in[0,T]\}_{n\in\bb N},\ \big\{\tfrac{1}{\sqrt n}\mc M_{t,n}^{\mc C}(g)\; ; \; t\in[0,T]\big\}_{n\in\bb N}
\end{equation}
are convergent and, in particular, they are tight, {\color{black}and finally that the process 
\[\Big\{\int_0^t\overline{\varepsilon}_n(s) ds\; ; \; t\in[0,T]\Big\}_{n\in\bb N}\]
is tight.}

\subsection{Tightness for $\{\mc E_0^n(\varphi)\}_{n\in\bb N}$ }  As mentioned {\color{black}at the beginning of Section \ref{ssec:ff}},  $\{\mc E_0^n(\varphi)\}_{n\in\bb N}$ converges in distribution, as $n\to\infty$, towards a centered normal random variable of variance $2\beta^{-2} \|\varphi\|_{\bb L^2(\bb R)}^2$, and in particular the sequence is tight.

\subsection{Tightness for $\big\{\int_0^t \mc E_s^n(\partial_u f(\cdot,0))\; ds \; ; \; t\in[0,T]\big\}_{n\in\bb N}$ {\color{black}and for $\big\{\int_0^t\overline{\varepsilon}_n(s) ds\; ; \; t\in[0,T]\big\}_{n\in\bb N}$}}

For these two integral terms we use the following tightness criterion:
\begin{proposition}[{\cite[Proposition 3.4]{gj2015}}] \label{prop:crit2} A sequence of processes of the form $\big\{\int_0^t X_n(s)\; ds\; ; \; t \in [0,T]\big\}_{n\in\bb N}$ is tight with respect to the uniform topology in $\mc C([0,T],
\bb R)$ if 
\[\lim_{n\to\infty} \sup_{t\in[0,T]} \bb E\big[X_n^2(t)\big] < +\infty.\]
\end{proposition}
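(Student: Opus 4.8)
The plan is to verify the two classical ingredients of the Kolmogorov--Chentsov (Billingsley) tightness criterion for continuous real-valued processes: tightness of the marginal at a fixed time, and a uniform-in-$n$ estimate on the moments of the increments. Write $Y_n(t):=\int_0^t X_n(s)\, ds$, so that $Y_n(0)=0$ for every $n$; in particular $\{Y_n(0)\}_{n\in\bb N}$ is trivially tight in $\bb R$. It then remains to control the increments $Y_n(t)-Y_n(s)=\int_s^t X_n(r)\, dr$ for $0\le s\le t\le T$.

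First I would apply the Cauchy--Schwarz inequality in the time variable:
\[
|Y_n(t)-Y_n(s)|^2=\Big|\int_s^t X_n(r)\, dr\Big|^2\le (t-s)\int_s^t X_n(r)^2\, dr,
\]
and then take expectations and use Fubini (here one uses that $X_n$ is a jointly measurable process, which is automatic in our applications since $X_n(s)$ is a fixed functional of $\omega^n(s)$) to obtain
\[
\bb E\big[|Y_n(t)-Y_n(s)|^2\big]\le (t-s)\int_s^t \bb E\big[X_n(r)^2\big]\, dr\le |t-s|^2\,\sup_{r\in[0,T]}\bb E\big[X_n(r)^2\big].
\]
By the hypothesis $\lim_{n\to\infty}\sup_{r\in[0,T]}\bb E[X_n(r)^2]<+\infty$, the quantity $\sup_{r\in[0,T]}\bb E[X_n(r)^2]$ is bounded uniformly in $n$ for all $n$ large enough, say $n\ge n_0$; hence there is a finite constant $C=C(T)$ with $\bb E[|Y_n(t)-Y_n(s)|^2]\le C|t-s|^2$ for all $n\ge n_0$ and all $s,t\in[0,T]$.

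Next I would invoke the Kolmogorov--Chentsov tightness criterion: since the increments satisfy $\sup_{n\ge n_0}\bb E[|Y_n(t)-Y_n(s)|^{\alpha}]\le C|t-s|^{1+\beta}$ with $\alpha=2$ and $\beta=1>0$, and since the marginals at $t=0$ are tight, the sequence $\{Y_n\}_{n\ge n_0}$ is tight in $\mc C([0,T],\bb R)$ equipped with the uniform topology. Finally, the finitely many remaining processes $Y_1,\dots,Y_{n_0-1}$ are each a.s.\ continuous, hence each induces a tight (singleton) law on $\mc C([0,T],\bb R)$; adjoining finitely many tight laws to a tight sequence preserves tightness, so $\{Y_n\}_{n\in\bb N}$ is tight.

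As for difficulties, the argument is essentially routine; the only two points that deserve a word of care are (i) the passage from the \emph{asymptotic} bound ``$\lim_n\sup_t\bb E[X_n^2(t)]<\infty$'' to a genuinely $n$-uniform bound, which forces the separate (and trivial) treatment of the first finitely many indices, and (ii) the joint measurability of $X_n$ used to interchange expectation and time integration. Neither is a real obstacle in the setting where the criterion is applied.
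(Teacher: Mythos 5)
Your argument is correct: the Cauchy--Schwarz bound $\bb E[|Y_n(t)-Y_n(s)|^2]\le |t-s|^2\sup_r\bb E[X_n^2(r)]$ followed by the Kolmogorov--Chentsov/Billingsley moment criterion (with the trivial tightness of $Y_n(0)=0$) is exactly the standard proof of this statement, and it matches the argument in the cited reference \cite{gj2015}, which is all the paper itself offers (it states the proposition by citation without reproving it). Your two points of care --- upgrading the asymptotic bound to a uniform one for $n\ge n_0$ and joint measurability for Fubini --- are handled appropriately.
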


One can easily check from the Cauchy-Schwarz inequality that
\begin{align*}
\bb E_\beta\Big[\big(\mc E_s^n(\partial_u f(\cdot,0)) \big)^2 \Big] & \leqslant \tfrac{C(\beta)}{n} \sum_{x\in\bb Z} \big(\partial_u f(\tfrac{x}{n},0)\big)^2 \xrightarrow[n\to\infty]{} C(\beta)t^2 \int_{\bb R} (\partial_u f(u,0))^2 \; du,
\end{align*}
{\color{black}and recall that $\overline{\varepsilon}_n(t)$ satisfies \eqref{eq:supvanish}.} Therefore, the criterion of Proposition \ref{prop:crit2} holds for both processes, and tightness follows.

\subsection{Convergence of martingales}
\label{sec:conv-mart}
By definition, and more precisely \eqref{eq:mart1} and \eqref{eq:mart2}, for any $n \in \bb N$ and  $\varphi\in\mc S(\bb R)$, $f$ as in Section \ref{sec:ou} and $g$ solution of \eqref{eq:pde2}, the martingales 
\[
\{\mc M_{t,n}^{\mc E}(\varphi)\; ; \; t\in[0,T]\}, \quad
 \{\mc M_{t,n}^{\mc C}(f)\; ; \; t\in[0,T]\},\quad \big\{\tfrac{1}{\sqrt n}\mc M_{t,n}^{\mc C}(g)\; ; \; t\in[0,T]\big\}
\]
are continuous in time. 
In order to prove that the sequences of martingales written in \eqref{eq:martingales} are convergent as $n\to\infty$, we use the following criterion, adapted from \cite[Theorem 2.1]{whitt} to the case of continuous processes:

%\begin{proposition}
%If the following two conditions are satisfied:
%\begin{align*}
%& \lim_{A\to\infty} \lim_{n\to \infty} \bb P_\beta\bigg[ \sup_{t\in[0,T]} \big\| \mc Y_t^n\big\|_{\mc H_{-k}} >A\bigg] = 0 \\
%& \lim_{\delta \to 0}\limsup_{n\to\infty} \bb P_\beta\Big[w(\mc Y_{\cdot}^n,\delta)>\varepsilon \Big] = 0, \qquad \text{for all } \varepsilon >0,
%\end{align*}
%where the modulus of continuity $w(Y,\delta)$ is defined by 
%\[w(\mc Y_\cdot,\delta):=\sup_{\substack{|t-s| <\delta \\ s,t \in [0,T]}} \big\|\mc Y_t-\mc Y_s\big\|_{\mc H_{-k}}, \]
%then the sequence $\{\mc Y^n_{\cdot}\}_{n\in\bb N}$ is tight in $\mc C([0,T],\mc H_{-k})$.
%\end{proposition}
%This proposition reduces the proof to showing that 
%\[\bb E_\beta\Big[\big(\mc M_{t,n}^{\mc C}(f)\big)^2\Big]\] is bounded, which follows from Lemma \ref{lem:quadvar-corr}.

\begin{proposition}\label{prop:conv-mart}
A sequence $\{\mc M_t^n\; ; \; t\in[0,T]\}_{n \in \bb N}$ of square-integrable martingales converges in distribution with respect to the uniform topology of $\mc C([0,T];\bb R)$, as $n\to\infty$, to a Brownian motion of variance $\sigma^2$ if 
%\begin{enumerate}
%\item for any $\varepsilon >0$, 
%\[\lim_{n\to\infty} \bb P\bigg[\sup_{t\in[0, T]} \big |\mc M_{t-}^n-\mc M_t^n\big| > \varepsilon\bigg]=0;\]
%\item 
for any $t \in [0,T]$, the quadratic variation $\big\langle\mc M^n\big\rangle_t$ converges in distribution, as $n\to\infty$, towards $\sigma^2 t$.
%\end{enumerate}
\end{proposition}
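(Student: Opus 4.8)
The plan is to prove Proposition \ref{prop:conv-mart} by the standard martingale functional central limit theorem, in the form tailored to continuous martingales. First I would invoke the martingale representation/characterization theorem (L\'evy's criterion): a continuous square-integrable martingale $\{\mc M_t\}$ with deterministic quadratic variation $\langle \mc M\rangle_t=\sigma^2 t$ is necessarily a Brownian motion of variance $\sigma^2$. So the target object is unambiguously identified, and it suffices to show that any subsequential limit in law of $\{\mc M^n\}_{n}$ is a continuous martingale with that quadratic variation. The proof therefore splits into three ingredients: tightness of $\{\mc M^n\}_n$ in $\mc C([0,T];\bb R)$, the martingale property of the limit, and identification of the limiting quadratic variation.

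For tightness I would combine the convergence in distribution of $\langle \mc M^n\rangle_t$ (which, being convergence to the deterministic value $\sigma^2 t$, is automatically uniform on $[0,T]$ by monotonicity, via a Dini-type argument) with the Burkholder--Davis--Gundy inequality to control the modulus of continuity: for $s<t$, $\bb E[(\mc M^n_t-\mc M^n_s)^4]\le C\,\bb E[(\langle\mc M^n\rangle_t-\langle\mc M^n\rangle_s)^2]$, and the right-hand side is uniformly small when $t-s$ is small because $\langle \mc M^n\rangle$ is close to the uniformly continuous deterministic function $\sigma^2 t$. This yields the Aldous/Kolmogorov-type tightness criterion in $\mc C([0,T];\bb R)$. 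Alternatively, since the paper only needs this in a setting where the $\langle\mc M^n\rangle_t$ converge, one may quote directly \cite[Theorem 2.1]{whitt} (or its continuous-path specialization in \cite{EthierKurtz}), which states precisely that convergence of quadratic variations of a sequence of square-integrable martingales, together with a Lindeberg-type negligibility of jumps — here vacuous, as the $\mc M^n$ are already continuous — implies convergence in distribution to the Brownian motion with the limiting variance.

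Once tightness is in hand, let $\mc M$ be any limit point along a subsequence; by Skorokhod representation we may assume $\mc M^n\to\mc M$ almost surely in $\mc C([0,T];\bb R)$. The martingale property passes to the limit by a uniform integrability argument: the $\bb L^2$-bound $\sup_n \bb E[(\mc M^n_t)^2]=\sup_n\bb E[\langle\mc M^n\rangle_t]<\infty$ (finite since $\langle\mc M^n\rangle_t\to\sigma^2 t$) gives uniform integrability of $\{\mc M^n_t\}_n$, hence $\bb E[\mc M_t\,|\,\mc F_s]=\mc M_s$ survives in the limit, so $\mc M$ is a continuous martingale. Similarly $(\mc M^n_t)^2-\langle\mc M^n\rangle_t$ is a martingale and, using the $\bb L^1$-convergence of $\langle\mc M^n\rangle_t\to\sigma^2 t$ and uniform integrability of $\{(\mc M^n_t)^2\}_n$ (which follows from an $\bb L^4$ bound via BDG and $\sup_n\bb E[\langle\mc M^n\rangle_t^2]<\infty$, itself a consequence of $\langle\mc M^n\rangle_t\to\sigma^2 t$ in distribution plus the deterministic bound), we conclude that $\mc M_t^2-\sigma^2 t$ is a martingale, i.e.\ $\langle\mc M\rangle_t=\sigma^2 t$. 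L\'evy's characterization then finishes the identification, and since every subsequential limit has the same law, the whole sequence converges.

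The main obstacle is the uniform integrability needed to pass both the martingale property and the quadratic-variation identification to the limit; this is where one really uses more than bare convergence of $\langle\mc M^n\rangle_t$ for a single fixed $t$ — one needs enough moment control (e.g.\ an $\bb L^4$ or higher moment bound on $\mc M^n_t$ via BDG, or at least a uniform $\bb L^2$-boundedness of the quadratic variations) to guarantee no mass escapes in the limit. In the application within this paper, the only nonnegligible martingale is $\mc M^{\mc C}_{\cdot,n}(f)$, whose quadratic variation is computed explicitly in Lemma \ref{lem:quadvar-corr1} and shown to concentrate at its (deterministic) mean in $\bb L^2(\bb P_\beta)$ by Lemma \ref{strong_convergence_QV}; these two lemmas supply exactly the convergence in distribution of $\langle\mc M^n\rangle_t$ towards $\sigma^2 t$ with $\sigma^2 = 2\beta^{-2}\int_{\bb R}\varphi(-\mc S\varphi)\,du$, and the explicit bounds there also give the higher-moment control required for the uniform integrability step, so no genuinely new estimate is needed here beyond assembling these pieces.
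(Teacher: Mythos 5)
Your proposal is correct in substance, but note that the paper offers no proof of Proposition \ref{prop:conv-mart} at all: it is simply the martingale FCLT of \cite[Theorem 2.1]{whitt} specialized to continuous martingales, for which the jump-negligibility hypothesis is vacuous and for which convergence in distribution of $\langle\mc M^n\rangle_t$ to the constant $\sigma^2 t$ is the same as convergence in probability. So the ``alternative'' you mention in passing is the paper's entire argument, while your main line --- tightness, Skorokhod representation, passage of the martingale property and of the quadratic variation to the limit, and L\'evy's characterization --- reconstructs the standard proof of that theorem. The one soft spot, which you honestly flag, is that the BDG and uniform-integrability steps as you set them up require moment bounds (e.g.\ $\sup_n\bb E\big[\langle\mc M^n\rangle_t^2\big]<\infty$) that do \emph{not} follow from the stated hypothesis of convergence in distribution at each fixed $t$; even your claim that $\sup_n\bb E[\langle\mc M^n\rangle_t]<\infty$ is ``finite since $\langle\mc M^n\rangle_t\to\sigma^2 t$'' is not a consequence of distributional convergence alone. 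You repair this by appealing to the explicit estimates of Lemmas \ref{lem:quadvar-corr1} and \ref{strong_convergence_QV}, which is legitimate for the application in this paper; but the theorem itself needs no such input, and the standard fix is localization: stop $\mc M^n$ at $\tau_n=\inf\{t:\langle\mc M^n\rangle_t\ge\sigma^2 T+1\}$, so that the stopped martingales have quadratic variation bounded by $\sigma^2 T+1$ (by continuity and monotonicity of $\langle\mc M^n\rangle$) and hence satisfy all the moment and uniform-integrability bounds you need, while $\bb P(\tau_n\le T)=\bb P(\langle\mc M^n\rangle_T\ge\sigma^2 T+1)\to0$ by the hypothesis at $t=T$, so the stopped and unstopped sequences share the same weak limit points. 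With that modification your argument proves the proposition exactly as stated; as written, it proves a version with extra moment hypotheses that nonetheless suffices for its use in Section \ref{sec:conv-mart}.
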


From Section \ref{ssec:quadratic} we conclude that the martingales  
\[
\{\mc M_{t,n}^{\mc E}(\varphi)\; ; \; t\in[0,T]\}_{n\in\bb N}, \quad \big\{\tfrac{1}{\sqrt n}\mc M_{t,n}^{\mc C}(g)\; ; \; t\in[0,T]\big\}_{n\in\bb N}
\]
vanish in distribution, as $n\to\infty$, and from Proposition \ref{prop:conv-mart} we conclude that the martingales
\[
 \{\mc M_{t,n}^{\mc C}(f)\; ; \; t\in[0,T]\}_{n\in\bb N}
\]
converge in distribution as $n\to\infty$ to a Brownian motion of variance 
\[2t\beta^{-2}\int_{\bb R} \varphi(u) (-\mc S\varphi)(u) \; du.\]
From this, we conclude that all the martingales  are tight.

\section{Characterization of limit points}\label{sec:charac}

From the previous section, we know that the sequence $\{\mc E_t^n\; ; \; t\in[0,T]\}_{n\in\bb N}$ is tight. Let $\{\mc E_t \; ; \; t \in[0,T]\}$ be one limit point  in $\mc C([0,T],\mc S'(\bb R))$. For simplicity, we still index the convergent subsequence by $n$.

We already know that $\{\mc E_0^n(\varphi)\}_{n\in\bb N}$ converges in distribution, as $n\to\infty$, towards a centered Gaussian random variable of variance $2\beta^{-2} \|\varphi\|_{\bb L^2(\bb R)}^2$.

For the integral term it is easy to see that the convergence  in law
\[\int_0^t \mc E^n_s(\mc L\varphi)\; ds \xrightarrow[n\to\infty]{} \int_0^t \mc E_s(\mc L\varphi)\; ds\] holds. The convergence for the martingale term has already been proved in Section \ref{sec:conv-mart}. Putting all these elements together, we conclude that, for any $\varphi \in \mc S(\bb R)$, we have
\[
\mc E_t(\varphi) = \mc E_0(\varphi) + \int_0^t \mc E_s(\mc L\varphi) \; ds + \mc M_t(\varphi),
\]
where $\mc M_t(\varphi)$ is a Brownian motion of quadratic variation \[ 2t\beta^{-2}\int_{\bb R} \varphi(u)(-\mc S\varphi)(u)\; du.\]
By Proposition \ref{prop:uniq}, the distribution of $\{\mc E_t\; ; \; t\in[0,T]\}$ is uniquely determined. We conclude that the sequence $\{\mc E_t^n \; ; \; t\in[0,T]\}_{n\in\bb N}$ has a unique limit point, and since it is tight,  it converges to this limit point. This  ends the proof of Theorem \ref{theo}.

\appendix

\section{Fourier transforms and L\'evy-Khintchine decomposition} 
\label{app:levyA}

\subsection{L\'evy-Khintchine decomposition}
\label{app:levy}
Let us first prove that ${\mc L}$ lets ${\mc S} (\bb R)$ invariant. Since the Fourier transform is a bijection from ${\mc S} (\bb R)$ into itself, it is sufficient to prove that if $\widehat{\varphi} \in {\mc S} (\bb R)$ then ${\widehat{\mc L \varphi}} \in {\mc S}(\bb R)$. Since $a>0$, the function 
$$\theta: k \in \bb R \to  \frac{(2i\pi k)^2}{\sqrt{a+i\pi k}} \in \bb C$$
is a smooth function and we have that for any $p \ge 0$, there exist constants $C_p, \alpha_p >0$  such that
\begin{equation}
\forall \; k \in \bb R, \quad |\theta^{(p)} (k)| \le C_p (1+|k|)^{\alpha_p}.
\end{equation}
Therefore, we have that ${\widehat{\mc L \varphi}} \in {\mc S}(\bb R)$.

\bigskip

Let $X$ be a random variable distributed according to the Gamma distribution $\Gamma(\frac12,1)$. More precisely, its density $f_X$ with respect to the Lebesgue measure is given by 
\[
f_X(x):=\mathbf{1}_{(0,+\infty)}(x) \frac{e^{-x}}{\sqrt{\pi x}} , \qquad x \in \bb R,
\]
and its characteristic function is 
\begin{equation} \label{eq:charac} \Phi_X(t)=\bb E[e^{itX}]=\frac{1}{\sqrt{1-it}}=\lim_{\varepsilon \to 0} \int_\varepsilon^{+\infty} \frac{e^{-x}}{\sqrt{\pi x}} e^{itx}\; dx, \qquad t \in\bb R.\end{equation}
\begin{lemma} \label{lem:identity}
For any $t\in\bb R$,
\[ H(t):=\frac{t^2}{\sqrt{1-it}}=\int_0^{+\infty} (e^{itx}-1-itx) \Pi(dx),
\]
where $\Pi(dx):=f_X''(x)\; dx$.
\end{lemma}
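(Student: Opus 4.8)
The plan is to recognise the right-hand side as the characteristic exponent attached to the (compensated) measure $\Pi$, and to evaluate it by reducing everything to the characteristic function $\Phi_X$ already identified in \eqref{eq:charac}. Setting $g(x):=e^{itx}-1-itx$, the natural idea is to transfer the two derivatives from $f_X$ onto $g$ by integrating by parts twice, exploiting the clean identity $g''(x)=-t^2e^{itx}$ together with the regularising behaviour $g(x)=O(x^2)$ and $g'(x)=O(x)$ as $x\to 0$. This turns the integral against $\Pi(dx)=f_X''(x)\,dx$ into a constant multiple of $\int_0^{+\infty}e^{itx}f_X(x)\,dx=\Phi_X(t)$, from which $H(t)=t^2\Phi_X(t)$ is meant to emerge; this is precisely the building block behind the L\'evy--Khintchine representation \eqref{eq:levy-kh}.

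Concretely, I would first restrict the integral to $[\varepsilon,+\infty)$ and then let $\varepsilon\to 0$, in order to handle the strong singularity of $f_X''$ at the origin rigorously. The two boundary contributions produced by the successive integrations by parts are $g(\varepsilon)f_X'(\varepsilon)$ and $g'(\varepsilon)f_X(\varepsilon)$; using the local behaviour $f_X(x)\sim \pi^{-1/2}x^{-1/2}$ and $f_X'(x)\sim -\tfrac12\pi^{-1/2}x^{-3/2}$ near $x=0$, both are of order $\varepsilon^{1/2}$ and hence vanish in the limit, while the intermediate integral $\int_\varepsilon^{+\infty} g'(x)f_X'(x)\,dx$ stays absolutely convergent (its integrand is $O(x^{-1/2})$ at the origin). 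What survives is then proportional to $\int_0^{+\infty}e^{itx}f_X(x)\,dx$, the proportionality constant being governed by $g''(x)=-t^2e^{itx}$; inserting \eqref{eq:charac} produces a multiple of $(1-it)^{-1/2}$, which is $H(t)$.

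An alternative, entirely explicit, route that I would carry out as a cross-check relies on the Gamma integral $\int_0^{+\infty}x^{s-1}e^{-(1-it)x}\,dx=\Gamma(s)(1-it)^{-s}$, valid a priori for $\mathrm{Re}(s)>0$. Differentiating gives $f_X''(x)=\pi^{-1/2}e^{-x}\big(x^{-1/2}+x^{-3/2}+\tfrac34 x^{-5/2}\big)$, so only the three monomials $x^{-1/2},x^{-3/2},x^{-5/2}$ occur. Paired with $g(x)=e^{itx}-1-itx$, the subtraction of $1+itx$ is exactly what licenses the analytic continuation of the identity above in the variable $s$ down to $\mathrm{Re}(s)>-2$: at $s=0$ and $s=-1$ the bracket $(1-it)^{-s}-1-its$ vanishes to the order needed to cancel the poles of $\Gamma(s)$. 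Summing the three contributions $\Gamma(\tfrac12),\Gamma(-\tfrac12),\Gamma(-\tfrac32)$ with weights $1,1,\tfrac34$ and simplifying with $w:=(1-it)^{1/2}$ makes the constant term and the $it$ term collapse, leaving a single multiple of $w^{-1}=(1-it)^{-1/2}$, which recombines to $H(t)$.

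The step I expect to be the main obstacle is the careful sign and boundary bookkeeping. Because $f_X'$ and $f_X''$ fail to be locally integrable at the origin, the subtraction $-1-itx$ in the integrand is indispensable, and one must check that no finite-part or Dirac contribution is silently lost when the derivatives are moved across the integral (equivalently, that the analytic continuation past the poles $s=0,-1$ is legitimate). Once this is settled, the only remaining task is to pin down the overall constant and confirm that the coefficient of $(1-it)^{-1/2}$ is exactly $t^2$, which yields the stated identity $H(t)=\int_0^{+\infty}(e^{itx}-1-itx)\,\Pi(dx)$.
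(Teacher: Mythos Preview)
Your primary route is essentially the paper's own argument run in reverse: the paper starts from $\Phi_X(t)=\int_0^\infty e^{itx}f_X(x)\,dx$ and integrates by parts twice (each time with an $\varepsilon$--cutoff, checking that the boundary terms at $\varepsilon$ vanish exactly as you describe) to land on $\int_0^\infty (e^{itx}-1-itx)f_X''(x)\,dx$, whereas you start from the latter and move the derivatives back onto $g$; the analytic content---same boundary estimates, same cancellations---is identical. Your alternative Gamma-integral computation via analytic continuation in $s$ is a genuinely different and more explicit check that the paper does not carry out; it has the advantage of producing the explicit density of $\Pi$ and the constant in one stroke, at the cost of having to justify the continuation past the poles at $s=0,-1$, which your subtraction of $1+itx$ does handle.
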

\begin{proof}
Note that, for $\varepsilon >0$, an integration by parts gives
\begin{align*}
\int_\varepsilon^{+\infty} f_X(x)e^{itx}\; dx 
%& = \bigg[\frac{e^{itx}}{it}f_X(x)\bigg]_\varepsilon^{+\infty} - \frac{1}{it}\int_\varepsilon^{+\infty} e^{itx} f_X'(x)\; dx \\
%& = -\frac{e^{it\varepsilon}}{it}f_X(\varepsilon)-\frac{1}{it}\int_\varepsilon^{+\infty}(e^{itx}-1)f'_X(x)\; dx - \frac{1}{it}\int_\varepsilon^{+\infty} f'_X(x)\; dx \\
& = \frac{1-e^{it\varepsilon}}{it}f_X(\varepsilon)-\frac{1}{it}\int_\varepsilon^{+\infty}(e^{itx}-1)f'_X(x)\; dx \\
& \xrightarrow[\varepsilon \to 0]{} -\frac{1}{it}\int_0^{+\infty}(e^{itx}-1) f'_X(x)\; dx,
\end{align*}
the last convergence holds since $f_X(\varepsilon) \simeq \frac{1}{\sqrt{\pi\varepsilon}}$ as $\varepsilon \to 0$. Therefore, we have the following identity
\begin{equation}\label{eq:first}
-it\Phi_X(t)=\int_0^{+\infty} (e^{itx}-1)f'_X(x)\; dx.
\end{equation}
A second integration by parts can now be done in the same way, and one can check that
\[
\int_\varepsilon^{+\infty} (e^{itx}-1)f'_X(x)\; dx=\Big(\frac{1-e^{it\varepsilon}}{it}+\varepsilon\Big)f_X'(\varepsilon)-\frac{1}{it}\int_\varepsilon^{+\infty} (e^{itx}-1-itx)f''_X(x)\; dx.
\]
Since $f'_X(\varepsilon)=-\frac{e^{-\varepsilon}}{\sqrt{\pi\varepsilon}}(1+\frac{1}{2\varepsilon})$, by taking the limit as $\varepsilon \to 0$ in the previous identity, using \eqref{eq:first} and recalling \eqref{eq:charac}, Lemma \ref{lem:identity} follows.
\end{proof}
The function we are interested in is the one  that appears in \eqref{eq:fourierL}, namely:
\[
\Psi_a(t):=\frac{1}{2\sqrt{3\lambda}} \frac{(2i\pi t)^2}{\sqrt{a+i\pi t}}=-\frac{2a^{3/2}}{\sqrt{3\lambda}}\; H\Big(-\frac{\pi t}{a}\Big), \qquad a>0,
\]
where $H$ is given in Lemma \ref{lem:identity}.
From that lemma we get
\begin{align*}
\Psi_a(t)&=-\frac{2a^{3/2}}{\sqrt{3\lambda}}\; \int_0^{+\infty} \Big(e^{-i \frac{\pi tx}{a}}-1+\frac{i \pi tx}{a}\Big) f_X''(x)\; dx \\
&=-\frac{4a^{5/2}}{\sqrt{3\lambda}}\; \int_0^{+\infty} (e^{-2i\pi t y} - 1 + 2i \pi t y) \; f''_X(2a y) \;dy.
\end{align*}
A simple computation gives
\[
f''_X(x)=\frac{e^{-x}}{\sqrt{\pi x}} \Big(1+\frac{1}{x}+\frac{3}{4x^2}\Big).
\]
Therefore
\[
\Psi_a(t)=\int_{0}^{+\infty}(e^{-2i\pi t y} - 1 + 2i \pi t y) \; {\Pi}_a(dy),
\]
where $\Pi_a$ has been defined in \eqref{eq:pia}. 
Proposition \ref{prop:levy} easily follows.

\subsection{Aternative definition: Fourier transformation and resolution} \label{app:alter}
Recall that  $f:\bb R \times \bb R_+\to\bb R$ is such that its Fourier transform with respect to the first variable is given by \eqref{FToff}. 
For any fixed $k\in\bb R$, the function $F_k(\cdot)$ is solution to 
\begin{equation} \label{eq:pdefourier}\left\{\begin{aligned} & 6\lambda F''_k(v)-(2a+2i\pi k) F_k(v)= 0, \qquad v \geqslant 0, \\ & 12\lambda F'_k(0) = 2i\pi k \widehat{\varphi}(k), \end{aligned} \right.\end{equation}
If we assume \eqref{eq:deriv}, one can easily check that 
\begin{equation*}
\widehat{\mc L\varphi}(k)=-4i\pi k F_k'(0)=\frac{1}{2\sqrt{3\lambda}}\frac{(2i\pi k)^2}{\sqrt{a+i\pi k}} \widehat{\varphi}(k), \qquad k \in \bb R,
\end{equation*} and therefore it coincides with \eqref{eq:fourierL}. Moreover, by inverting in Fourier space the system \eqref{eq:pdefourier}, one can easily recover the partial differential equation satisfied by $f$ and given in \eqref{eq:laplacepde}. Finally, the integrability conditions \eqref{eq:integra} follow from the Parseval identity:
\begin{align*}
\int_{\bb R \times \bb R_+} \big[8af^2+24\lambda(\partial_v f)^2\big](u,v)\; du dv & = \int_{\bb R \times \bb R_+} 8a |F_k(v)|^2 + 24\lambda |F'_k(v)|^2 \; dk dv \\
& = \int_{\bb R} \frac{|2i\pi k|^2}{2\sqrt{6\lambda}}  \frac{\sqrt{a+|a+i\pi k|}}{|a+i\pi k|} |\widehat{\varphi}(k)|^2\; dk \\ 
 & = \int_{\bb R} \widehat{\varphi}(-k) (- \widehat{\mc S\varphi})(k)\; dk \\
 & = \int_{\bb R} \varphi(u)(-\mc S \varphi)(u)\; du.
\end{align*}

\section{Aside computations} \label{app}

\subsection{The carr\'e du champ}
\label{app:carre}
Let $f,g:\Omega \to \bb R$ be local smooth functions. 
Since the operator $\mc A$ is a first-order operator, we have the Leibniz rule
\[
\mc A (fg) = f \mc A g + g \mc A f.
\]
The operators $\mc S_1$, $\mc S_2$ are second-order differential operators. Therefore, the relation above does not hold. Recall that the bilinear operators $\mc Q_i$ ($i=1,2$) are given by
\[
\mc Q_i (f,g) = \mc S_i(fg) - f \mc S_i g - g \mc S_i f.
\]
 In our situation, these {\it carr\'es des champs} have simple expressions:
\begin{align*}
\mc Q_1(f,g) & = 2\sum_{x\in \bb Z} (\mc X_x f) ( \mc X_x g),\\
\mc Q_2(f,g) & = 2\sum_{x\in \bb Z} (\mc Y_x f) ( \mc Y_x g).
\end{align*}
We will only evaluate the carr\'e du champ on pairs of functions of the form $(\omega_x, \omega_y)$. In the case of $\mc Q_1$, we have four cases. First, $\mc Q_1(\omega_x,\omega_y)=0$ if $|y-x| \geq 3$. We have that
\begin{align*}
\mc Q_1(\omega_{x-1},\omega_{x+1})
	&= 2(\mc X_x \omega_{x-1})(\mc X_x \omega_{x+1})\\
	&= 2(\omega_{x+1}-\omega_x)(\omega_x - \omega_{x-1}).
\end{align*}
Using the identity $2(a-b)(b-c) =(a-c)^2-(a-b)^2 -(b-c)^2$ we can rewrite
\[
\mc Q_1(\omega_{x-1},\omega_{x+1}) =  (\omega_{x+1}-\omega_{x-1})^2 - (\omega_{x+1}-\omega_x)^2-(\omega_x -\omega_{x-1})^2.
\]
In a similar way,
\begin{align*}
\mc Q_1(\omega_x,\omega_{x+1}) 
		& =  2(\omega_{x+1}-\omega_{x})^2 - (\omega_{x+2}-\omega_{x+1})^2 -(\omega_{x+2}-\omega_{x})^2\\
		& \quad -(\omega_{x+1}-\omega_{x-1})^2-(\omega_{x}-\omega_{x-1})^2,\\
\mc Q_1(\omega_{x},\omega_{x}) & = 2(\omega_{x+2}-\omega_{x+1})^2+2(\omega_{x+1}-\omega_{x-1})^2 +2(\omega_{x-1}-\omega_{x-2})^2.
\end{align*}
In the case of $\mc Q_2$ we have three different cases:  
\begin{align*}
&\mc Q_2(\omega_x, \omega_y) =0, \quad |y-x| \geq 2,\\
&\mc Q_2(\omega_x,\omega_{x+1})  = -2\omega_x \omega_{x+1},\\
&\mc Q_2(\omega_x,\omega_x) = 2\omega_{x-1}^2+2\omega_{x+1}^2.
\end{align*}

\subsection{The generator applied to quadratic functions}
\label{subsec:beurkk}
As mentioned before, the correlation field plays a fundamental role in the derivation of energy fluctuations. In order to see this, we need to make a very detailed study of the action of the generator $\mc L_n$ over functions of the form
\[
\sum_{x,y \in \bb Z} \omega_x \omega_y\;  q_{x,y},
\]
where $q: \bb Z^2 \to \bb R$ will be chosen within a few lines and is supposed to be symmetric: $q_{x,y}=q_{y,x}$. We have
\begin{equation}\label{eq:ipp}
\mc L_n (\omega_x \omega_y) = \omega_x \mc L_n \omega_y + \omega_y \mc L_n \omega_x + \lambda \mc Q_1(\omega_x,\omega_y) + \gamma_n \mc Q_2(\omega_x,\omega_y).
\end{equation}
Let us introduce some notation that will be useful later on. For $u: \bb Z \to \bb R$ we define ${\tilde{\nabla}} u, \tilde{\Delta} u: \bb Z \to \bb R$ as 
\[
{\tilde{\nabla}}u_x = \tfrac{1}{2} \big( u_{x+1} - u_{x-1}\big), \quad \tdel u_x = \tfrac{1}{6} \big( u_{x-2} + 2 u_{x-1} -6 u_x +2 u_{x+1} +u_{x+2}\big).
\]
%For $f^n_x = f(\frac{x}{n})$, where $f:\bb R \to \bb R$ is a smooth function, we have that
%\begin{align*}
%\nabla f_x^n& = \tfrac{1}{n} f'\big(\tfrac{x}{n} \big) + \mc O\big( \tfrac{1}{n^3}\big), \\
%\tdel f_x^n &= \tfrac{1}{n^2} f''\big( \tfrac{x}{n}\big) + \mc O\big( \tfrac{1}{n^4}\big).
%\end{align*}
One can check that
\[
\mc L_n \omega_x = 2 {\tilde{\nabla}} \omega_x + 6\lambda \tdel \omega_x - 2 \gamma_n \omega_x.
\]
For $q:\bb Z^2 \to \bb R$ define ${\bf A} q: \bb Z^2 \to \bb R$ as
\[
{\bf A} q_{x,y} = q_{x+1,y} -q_{x-1,y} +q_{x,y+1}-q_{x,y-1}.
\]
In other words,
\[
{\bf A} q_{x,y} = 2 {\tilde{\nabla}} q_{\underset{\uparrow}{\vphantom{y}x},y} + 2 {\tilde{\nabla}} q_{x,\underset{\uparrow}{y}},
\]
where the arrows indicate on which variable the ${\tilde{\nabla}}$ operator acts. Define as well ${\bf S} q: \bb Z^2 \to \bb R$ as
\[
{\bf S} q_{x,y} = 6\tdel q_{\underset{\uparrow}{\vphantom{y}x},y} + 6 \tdel q_{x,\underset{\uparrow}{y}}.
\]
Performing an integration by parts and using \eqref{eq:ipp} we have that
\begin{align*}
\mc L_n \sum_{x,y \in \bb Z} \omega_x \omega_y q_{x,y} 
		&= \sum_{x,y \in \bb Z} \omega_x \omega_y \big( -{\bf A} + \lambda {\bf S} -4 \gamma_n\big) q_{x,y} \\
		& \quad + \sum_{x,y \in \bb Z} \big(\lambda \mc Q_1(\omega_x,\omega_y) + \gamma_n \mc Q_2(\omega_x,\omega_y) \big) q_{x,y}.
\end{align*}
The second sum on the right hand side of the last identity is what we call the {\it stochastic interaction term}, since it only appears due to the stochastic nature of the dynamics. Although the first sum also depends on the stochastic noise, it can be constructed from deterministic dynamics as well. 

The computations of Section \ref{app:carre} show that
\begin{align}
& \sum_{x,y \in \bb Z}  \mc Q_1(\omega_x, \omega_y) q_{x,y} 
		= \sum_{x \in \bb Z}2 (\omega_{x+1}-\omega_{x-1})^2 \big\{ q_{x,x} + q_{x-1,x+1} - q_{x-1,x} -q_{x,x+1}\big\} \notag\\
		&   \qquad+ \sum_{x \in \bb Z}2 (\omega_{x+1}-\omega_{x})^2 \big\{ q_{x-1,x-1} + 2 q_{x,x+1} + q_{x+2,x+2}  \big\} \notag \\ 
		& \qquad - \sum_{x\in \bb Z} 2(\omega_{x+1}-\omega_x)^2 \big\{q_{x-1,x} +q_{x-1,x+1} +q_{x,x+2}+q_{x+1,x+2}\big\}, \notag\\
		& = \sum_{x\in\bb Z}2\omega_x^2 \big\{q_{x-2,x-2}+2q_{x-1,x-1}+2q_{x+1,x+1}+q_{x+2,x+2}\big\} \notag \\
		& \qquad - \sum_{x\in\bb Z} 4\omega_x^2 \big\{q_{x-1,x+1}+q_{x+1,x+2}+q_{x-2,x-1}	\big\}\notag\\
		& \qquad - \sum_{x\in\bb Z} 4\omega_x\omega_{x+1}	\big\{ q_{x-1,x-1} + 2 q_{x,x+1} +q_{x+2,x+2}  -q_{x-1,x} -q_{x-1,x+1} -q_{x,x+2}-q_{x+1,x+2}\big\}\notag\\
		& \qquad -\sum_{x \in \bb Z}4 \omega_{x+1}\omega_{x-1} \big\{ q_{x,x} + q_{x-1,x+1} - q_{x-1,x} -q_{x,x+1}\big\}, \label{eq:car1}
		\end{align}
and we also have
		\begin{equation} \label{eq:car2}
\sum_{x,y \in \bb Z} \mc Q_2(\omega_x,\omega_y) q_{x,y} 
		= \sum_{x \in \bb Z}\big\{ 2(\omega_{x-1}^2+\omega_{x+1}^2)q_{x,x} - 4  \omega_{x} \omega_{x+1} q_{x,x+1}
\big\}.
\end{equation}
Let us go on and consider now the particular choice $q_{x,y}:=f_{x,y}^n$ given in \eqref{eq:fxy}
where $f: \bb  R \times \bb R_+ \to \bb R$ is a smooth function with enough decay at infinity. 
The computations are pretty involved; we consider in this section only the \textit{linear} part 
\[
\sum_{x,y \in \bb Z} \omega_x \omega_y \big( -{\bf  A} + \lambda {\bf S}-4\gamma_n \big) f_{x,y}^n.
\]
To simplify the notation we define $f(u,v): = f(u,-v)$ for $v <0$. We call this definition {\it symmetrization}. Extending $f$ in this way, the resulting function may be no longer differentiable at $v=0$ (but it is smooth in $u$ and has left and right derivatives in $\nu$ at $0$).  Moreover, with this extension, and recalling the definition \eqref{eq:fxy} of $f_{x,y}^n$, we have:
\begin{equation}
\label{eq:symmetry}
f_{x,y}^n = f_{y,x}^ n, \qquad \text{for any } x,y \in \bb Z.
\end{equation}
% Observe that with this extension $f_{x,y}^n = f\big(\tfrac{x+y}{2n}, \tfrac{y-x}{\sqrt n}\big)$ for any $(x,y) \in \ZZ^2$.
We start by computing ${\bf  A} f_{x,y}^n$ and ${\bf  S} f_{x,y}^n$. 
%We will make repeated use of Taylor expansions, so it will be convenient to introduce some notation. For $i,j \in \bb Z$, we write 
%\[
%[i,j] = f_{x+i,y+j}^n-f_{x,y}^n, \quad (i,j) = f\big(\tfrac{x+y}{2n} + \tfrac{i}{2n}, \tfrac{y-x}{\sqrt n} + \tfrac{j}{\sqrt n}\big)- f \big(\tfrac{x+y}{2n}, \tfrac{y-x}{\sqrt n}\big).
%\]
%In the following computations, the pair $(x,y)$  involved in $[i,j]$ will be always clear. 
%Notice the relation
%\[
%[i,j] = (i+j,j-i).
%\]
%We denote by $\partial_u f$ (resp. $\partial_v f$) the partial derivative of $f$ with respect to the first (resp. to the second) variable. We denote the higher order partial derivatives as  $\partial^2_{uv}$, $\partial^3_{uuu}, \dots$. 
%
Consider $(x,y)$ situated on the  upper  half-plane delimited by the diagonal $\{x=y\}$, namely: $y\geq x$.
Then, for any $i \in \bb Z$ and for any $j \ge 0$, we have 
\begin{align}
\label{eq:taylor}  
f\big(\tfrac{x+y}{2n} + \tfrac{i}{2n}, \tfrac{y-x}{\sqrt n} + \tfrac{j}{\sqrt n}\big)  &- f \big(\tfrac{x+y}{2n}, \tfrac{y-x}{\sqrt n}\big)  \\    
&=  \tfrac{j}{\sqrt n} \partial_v f\big(\tfrac{x+y}{2n}, \tfrac{y-x}{\sqrt n}\big)  + \tfrac{1}{n} \Big( \tfrac{i}{2} \partial_u + \tfrac{j^2}{2} \partial_{vv}^2\Big) f\big(\tfrac{x+y}{2n}, \tfrac{y-x}{\sqrt n}\big)
 \notag \\ & +\tfrac{1}{n^{3/2}} \Big( \tfrac{ij}{2} \partial^2_{uv} +\tfrac{j^3}{6} \partial^3_{vvv}\Big) f\big(\tfrac{x+y}{2n}, \tfrac{y-x}{\sqrt n}\big) + \mc O_{i,j}\big( \tfrac{1}{n^2}\big), \notag
\end{align}
where $\mc O_{i,j}(\frac{1}{n^2})$ represents a sequence of functions in $\bb Z^2$ bounded by $\frac{c(i,j)}{n^2}$ for some finite constant $c(i,j)$ and for any $n \in \bb N$.  In the following, we denote $\mc O(\frac{1}{n^2})$ when the sequence of functions is bounded by $\frac{c}{n^2}$ and $c$ does not depend on any index.

From now on we denote \[
\partial f_{x,y}^n = \begin{cases} \partial f \big(\tfrac{x+y}{2n}, \tfrac{y-x}{\sqrt n}\big) & \text{ if } y > x, \\
\partial f \big(\tfrac{x}{n}, 0^+\big) & \text{ if } y=x,\end{cases}
\]where $\partial$ can be any differentiate operator involving the variable $v$.
 For $x \neq y$ we have  from \eqref{eq:taylor} that 
\[
{\bf A} f_{x,y}^n 
		= \tfrac{2}{n} \partial_u f^n_{x,y} + \mc O\big( \tfrac{1}{n^2}\big).
\]
For $x=y$, the expression is different due to the symmetrization of $f$. We have that
\[ {\bf A} f_{x,x}^n
	= \tfrac{2}{n} \partial_u  f^n_{x,x}+ \mc O\big( \tfrac{1}{n^{3/2}}\big).
\] 
Note that the term of order $\mc O(\frac{1}{n})$ is the same in both expressions, the difference appears only at order $\mc O(\frac{1}{n^{3/2}})$. 

Now let us compute ${\bf S} f_{x,y}$. The lack of regularity of $f$ at $v=0$ affects the computations if $|x-y| \leq 1$. In particular, we can ensure that all the differences of the form $f^n_{x+k,y+\ell}-f^n_{x,y}$ appear in such a way that $x+k\leq y+\ell$ and $x\leq y$. With this precaution, we avoid to cross the axis $\{x=y\}$ where derivatives can have jumps due to the irregularity of $f$. For $|y-x| \geq 2$ we have
%\begin{align*}
\[{\bf S} f_{x,y}^n 
%		&= [-2,0]+2[-1,0]+2[1,0]+[2,0] +[0,-2]+2[0,-1]+2[0,1]+[0,2]\\
%		&= (-2,2) + 2(-1,1) +2(1,-1)+(2,-2)+(-2,-2)+2(-1,-1)+2(1,1)+(2,2)\\
%		&=\tfrac{1}{n}(2+1+1+2+2+1+1+2)\partial_{vv}^2 f(\overline{\bf x})  + \ondos \\
=\tfrac{12}{n} \partial_{vv}^2 f^n_{x,y} + \ondos.
%\end{align*}
\]
For $y=x+1$ we write the Taylor expansion centered at $(\frac x n,0)$ as follows
\[
{\bf  S} f_{x,x+1}^n 
%		& = [-2,1]+2[-1,1]-12[0,1]+2[1,1]+[1,2] + [-1,0]+2[0,2]+[0,3]\\
%		&= (-1,3)+2(0,2)-12(1,1)+2(2,0)+(3,1)+(-1,1)+2(2,2)+(3,3)\\
		= \Big(\tfrac{4}{\sqrt n} \partial_v  +\tfrac{12}{n} \partial_{vv}^2 \Big)f^n_{x,x} + \mc O\big( \tfrac{1}{n^{3/2}}\big).
\]
%The difference with the expression for $|y-x| >2$ is
%\begin{align*}
% [1,1]-[2,0]+[-1,-1]-[0,-2]
%		&= (2,0) -(2,-2) +(-2,0) -(-2,-2) \\
%		&= \Big(\tfrac{4}{ \sqrt n} \partial_v  -\tfrac{4}{n} \partial_{vv}^2 +\tfrac{8}{3 n^{3/2}} \partial_{vvv}^3 \Big) f \big( \tfrac{2x+1}{2n}, \tfrac{1}{\sqrt n}\big) \\
%		&\quad  \quad+ \ondos.
%\end{align*}
%%%
For $y=x$ we have 
\[
{\bf S} f_{x,x}^n 
%		&= 2[-2,0] + 4[-1,0] + 4[0,1] + 2[0,2] \\
%		&= 2(-2,2) + 4(-1,1) + 4(1,1) + 2(2,2)\\
		= \Big(\tfrac{16}{\sqrt n} \partial_v + \tfrac{12}{n} \partial_{vv}^2 \Big)
		f^n_{x,x} + \mc O\big( \tfrac{1}{n^{3/2}}\big).
\]
%Summarizing, we have
%\begin{align*}
%\mc S f_{x,y}^n 
%		&= \tfrac{12}{n} \partial_{vv}^2 f\big(\tfrac{x+y}{2n}, \tfrac{y-x}{\sqrt n}\big) + \mathbf{1}_{\{|y-x|=1\}} \Big( \tfrac{4}{\sqrt n} \partial_v +\tfrac{5}{2 n^{3/2}} \partial_{uv}^2+ \tfrac{38}{3 n^{3/2}} \partial_{vvv}^3\Big) f\big(\tfrac{x}{n},0\big) \\
%		&\quad  + \mathbf{1}_{\{y=x\}}\Big( \tfrac{16}{ \sqrt n} \partial_v + \tfrac{20}{3 n^{3/2}} \partial_{vvv}^3\Big) f\big(\tfrac{x}{n},0\big) + \ondos.
%\end{align*}
Putting together all the expressions computed above, and recalling \eqref{eq:gamma}, we see that
\begin{equation*}
\begin{split}
\sum_{x,y \in \bb Z} \omega_x  \omega_y \big( - {\bf A}  + \lambda {\bf S} - 4 \gamma_n\big)  f_{x,y}^n
		 &= \sum_{x,y \in \bb Z} \omega_x \omega_y \Big\{ \tfrac{2}{n} \big( -\partial_u + 6\lambda \partial_{vv}^2 - 2a\big)f^n_{x,y}+ \ondos\Big\}\\
		&  + \sum_{x \in \bb Z} \omega_x \omega_{x+1} \Big\{ \tfrac{8\lambda}{\sqrt n} \partial_v f^n_{x,x} + \mc O\big( \tfrac{1}{n^{3/2}} \big) \Big\}\\
		& + \sum_{x \in \bb Z} \omega_x^2 \Big\{ \tfrac{16\lambda}{\sqrt n} \partial_vf^n_{x,x} + \mc O\big( \tfrac{1}{n^{3/2}}\big) \Big\}.
\end{split}
\end{equation*}

\subsection{The carr\'e du champ revisited}

In this section we perform the same computations for both {\it carr\'es des champs}. It is quite easy to see from \eqref{eq:car2} that
\begin{align*}
\sum_{x,y \in \bb Z} \gamma_n \mc Q_2(\omega_x, \omega_y) f^n_{x,y} & = \sum_{x\in\bb Z} \omega_x\omega_{x+1} \Big\{ -\tfrac{4a}{n} f^n_{x,x} + \mc O\big( \tfrac{1}{n^{3/2}}\big)\Big\}  + \sum_{x\in\bb Z} \omega_x^2 \Big\{  \tfrac{4a}{n} f^n_{x,x}+ \ondos \Big\}.
\end{align*}
We now deal with $\mc Q_1$ (see \eqref{eq:car1}). First, we consider the term with $\omega_x^2$, and we write the Taylor expansion at $(\frac x n,0)$ as 
\begin{multline*}
\tfrac{1}{2}f^n_{x-2,x-2}+f^n_{x-1,x-1}+f^n_{x+1,x+1}+\tfrac{1}{2}f^n_{x+2,x+2}-f^n_{x-1,x+1}-f^n_{x+1,x+2}-f^n_{x-2,x-1} \\
%& =\tfrac12[-2,-2]+[-1,-1]+[1,1]+\tfrac{1}{2}[2,2]-[-1,1]-[1,2]-[-2,-1]\\
%&=\tfrac12(-4,0)+(-2,0)+(2,0)+\tfrac{1}{2} (4,0)-(0,2)-(3,1)-(-3,1)\\
= \Big(-\tfrac{4}{\sqrt n} \partial_v - \tfrac{3}{n} \partial^2_{vv} \Big)f^n_{x,x} + \mc O\big( \tfrac{1}{n^{3/2}}\big).
\end{multline*}
Then, we have the term with $\omega_x\omega_{x+1}$, and we write the Taylor expansions at $(\frac x n,0)$:
\begin{multline*}
f^n_{x-1,x-1}+ 2 f^n_{x,x+1}  + f^n_{x+2,x+2} -f^n_{x-1,x} -f^n_{x-1,x+1} -f^n_{x,x+2}-f^n_{x+1,x+2}\\
%& =[-1,-1]+ 2[0,1]+[2,2]-[-1,0]-[-1,1]-[0,2]-[1,2]\\
%& = (-2,0)+2(1,1)+(4,0)-(-1,1)-(0,2)-(2,2)-(3,1)\\
 = \Big(-\tfrac{4}{\sqrt n} \partial_v  -\tfrac4n \partial_{vv}^2 \Big)f^n_{x,x}  +\mc O\big( \tfrac{1}{n^{3/2}} \big) .
\end{multline*}
Finally, the term with $\omega_{x+1}\omega_{x-1}$ gives the Taylor expansion centered at $(\frac x n, 0)$  as: 
\[
f^n_{x,x} + f^n_{x-1,x+1} - f^n_{x-1,x} -f^n_{x,x+1}   = \tfrac{1}{n} \partial^2_{vv}f^n_{x,x} + \mc O\big( \tfrac{1}{n^{3/2}}\big). 
\]
Therefore,
\begin{align*}
\sum_{x,y \in \bb Z} \lambda \mc Q_1(\omega_x, \omega_y) f^n_{x,y} & =   \sum_{x\in\bb Z} \omega_x^2 \Big\{ \big( -\tfrac{16\lambda}{\sqrt n}\partial_v-\tfrac{12\lambda}{n}\partial^2_{vv}\big) f^n_{x,x}+ \mc O\big( \tfrac{1}{n^{3/2}} \big) \Big\} \\
& \quad +\sum_{x\in\bb Z} \omega_x\omega_{x+1} \Big\{ \big(\tfrac{16\lambda}{\sqrt n} \partial_v +\tfrac{16\lambda}{n}\partial_{vv}^2\big)f^n_{x,x} + \mc O\big( \tfrac{1}{n^{3/2}} \big) \Big\}\\
&  \quad + \sum_{x\in\bb Z} \omega_{x+1}\omega_{x-1}\Big\{ -\tfrac{4\lambda}{n}\partial^2_{vv}f^n_{x,x}+ \mc O\big( \tfrac{1}{n^{3/2}} \big) \Big\}.
\end{align*}
Putting every computation together, we obtain
\begin{align*}
\mc L_n \sum_{x,y \in \bb Z} \omega_x \omega_y f_{x,y}^n  = & \tfrac{2}{n} \sum_{x,y \in \bb Z} \omega_x \omega_y \Big\{ \big( -\partial_u + 6\lambda \partial_{vv}^2 - 2a\big) f^n_{x,y} + \mc O\big( \tfrac{1}{n} \big) \Big\} \\
& + \tfrac{4}{\sqrt n} \ \sum_{x\in\bb Z} \omega_x\omega_{x+1} \Big\{ \big(6\lambda \partial_v-\tfrac{1}{\sqrt n}(a-4\lambda\partial_{vv}^2)\big) f^n_{x,x} + \mc O\big( \tfrac{1}{n} \big) \Big\}  \\
& + \tfrac{4}{n} \ \sum_{x\in\bb Z} \omega_x^2\Big\{ \big(-3\lambda \partial^2_{vv}+a\big)f^n_{x,x} + \mc O\big( \tfrac{1}{\sqrt n} \big) \Big\}  \\
& - \tfrac{4\lambda}{n} \sum_{x\in\bb Z} \omega_{x+1}\omega_{x-1} \Big\{ \partial^2_{vv}f^n_{x,x}+ \mc O\big( \tfrac{1}{\sqrt n} \big) \Big\}.  
\end{align*}
and after simplifications
\begin{align*}
\mc L_n \sum_{x,y \in \bb Z} \omega_x \omega_y f_{x,y}^n  =& \tfrac{2}{n} \sum_{x \neq y} \omega_x \omega_y \Big\{ \big( -\partial_u + 6\lambda \partial_{vv}^2 - 2a\big) f^n_{x,y} +\mc O\big( \tfrac{1}{n} \big)\Big\} \\
& - \tfrac{2}{n} \ \sum_{x\in\bb Z} \omega_x^2\Big\{ \partial_{u}  f^n_{x,x} + \mc O\big( \tfrac{1}{\sqrt n} \big) \Big\}  \\
&  + \tfrac{24\lambda}{\sqrt n} \ \sum_{x\in\bb Z} \omega_x\omega_{x+1} \Big\{  \partial_v f^n_{x,x}+ \mc O\big( \tfrac{1}{n} \big)\Big\} \\
& +  \tfrac{4}{n} \ \sum_{x\in\bb Z} \omega_x\omega_{x+1} \Big\{(4\lambda\partial_{vv}^2-a)f^n_{x,x}\Big\} \\ & - \tfrac 4 n \ \sum_{x \in \bb Z} \omega_{x+1}\omega_{x-1} \Big\{\lambda \partial^2_{vv}f^n_{x,x}+ \mc O\big( \tfrac{1}{\sqrt n} \big)\Big\} .  
%& - \tfrac{4\lambda}{n} \sum_{x\in\bb Z} \omega_{x+1}\omega_{x-1} \Big\{ \partial^2_{vv}f^n_{x,x}+ \mc O\big( \tfrac{1}{\sqrt n} \big) \Big\}.  \label{eq:gen4}
\end{align*}

\section{$\bb L^2$ convergence of  quadratic variations}
\label{QV_convergence}
In this section we prove Lemma \ref{strong_convergence_QV}. We start by showing the $\bb L^2(\bb P_\beta)$ convergence for $\big\langle \mc M_{\cdot,n}^{\mc E}(\varphi)\big\rangle_t$, namely \eqref{eq:l2e}.
Recall the explicit formula for the quadratic variation given in \eqref{eq:quadener}. By using the inequality  $(x+y)^2\leq2x^2+2y^2$ several times, we split the four terms  appearing in \eqref{eq:quadener}  and we control each one separately by using exactly the same approach. We only give the proof of the control for one of them. We start by computing the variance of
\[
\sqrt n \; \int_0^t 4\lambda\Big\{ \sum_{x\in\bb Z}   \omega_x^n (s) \omega^n_{x+1}(s)(\varphi_{x+1}^n-\varphi_x^n) \Big\}^2\; ds,
\]
where $\varphi_x^n=\varphi(\frac x n)$.
Last expression can be written as 
\begin{align}
\label{eq:imp}
\sqrt n\;  \int_0^t & 4\lambda \sum_{x,y\in\bb Z}   \omega_x^n(s)  \omega^n_{x+1}(s)\omega^n_y(s) \omega^n_{y+1} (s)\; (\varphi_{x+1}^n-\varphi_x^n)(\varphi_{y+1}^n-\varphi_y^n) ds.
\end{align} 
Note that under the equilibrium probability measure $\mu_{\beta}$ the expectation of $[\omega_{x}^n \omega^n_{x+1} \omega_y^n  \omega^n_{y+1}] (s)$ is non-zero only for diagonal terms $y=x$, so that the expectation of (\ref{eq:imp}) is equal to
\begin{align*}
4\lambda t \sqrt n \;  \sum_{x,y\in\bb Z}  \langle  \omega_0^2\omega_{1}^2\rangle_\beta(\varphi_{x+1}^n-\varphi_x^n)^2.
\end{align*} 
Define $\chi_{x,x+1}:=\omega_x^2\omega^2_{x+1}-\langle\omega_0^2\omega_1^2\rangle_{\beta}$ which are centered random variables. By stationarity and the Cauchy-Schwarz inequality, the variance of \eqref{eq:imp} is bounded by
\begin{align}
& C t^2 n\; \int_\Omega \Big(\sum_{x\in\bb Z}\chi_{x,x+1} (\varphi_{x+1}^n-\varphi_x^n)^2\Big)^2\; \mu_\beta(d\omega) \label{eq:exp1}\\
+ \; & C t^2 n\; \int_\Omega \Big(\sum_{x\neq y\in\bb Z} \omega_x\omega_{x+1}\omega_y\omega_{y+1}(\varphi_{x+1}^n-\varphi_x^n)(\varphi_{y+1}^n-\varphi_y^n) \Big)^2\; \mu_\beta(d\omega) \label{eq:exp2}
\end{align} 
for some constant $C>0$. % Above, we have splitted the sum into diagonal terms and the non-diagonal terms.
First we look at the diagonal terms. Developing the square of the sum, since the variables $\chi_{x,x+1}$ and $\chi_{y,y+1}$ are correlated only if $|y-x| \le 1$, by the Cauchy-Schwarz inequality, the term \eqref{eq:exp1} can be bounded from above by 
\begin{align*}
t^2C(\beta)n \sum_{x\in\bb Z}(\varphi_{x+1}^n-\varphi_x^n)^4={\mathcal O}(n^{-2}).
\end{align*}
For the remaining term, by developing the square of the sum and using the fact that the variables $\{\omega_x\}_{x\in\bb Z}$ have mean zero and are i.i.d.~under $\mu_\beta$ we bound it from above by
\begin{align*}
t^2C(\beta)n \sum_{x, y \in\bb Z}(\varphi_{x+1}^n-\varphi_x^n)^2 (\varphi_{y+1}^n-\varphi_y^n)^2=\mathcal{O}(n^{-1}).
\end{align*}
We let the reader work out the same argument in order to finish the proof of  \eqref{eq:l2e}. 

Now we turn to $\big\langle \mc M_{\cdot,n}^{\mc C}(f)\big\rangle_t$ and we  prove \eqref{eq:l2}. Recall the explicit expression \eqref{eq:patmar} in the proof of Lemma \ref{lem:quadvar-corr1}. We use again the inequality  $(x+y)^2\leq2x^2+2y^2$ several times and we control each term separately by using exactly the same approach. We present the proof for the contribution of the term with  $\mathcal X_z$ but we note that for the term with $\mathcal Y_z$ the estimates are analogous. Recall \eqref{eq:00}-\eqref{eq:05}. We note that the most demanding terms are those coming from \eqref{eq:04} and \eqref{eq:05}. To make the exposition as simple as possible, we look only at one of these terms, which is of the form
\[
\tfrac{1}{\sqrt n}\int_0^t\sum_{z\in\bb Z}2\lambda \Big(2\sum_{y\notin\{z-1,z,z+1\}} \omega_y^n(s)  \omega_z^n(s) (f_{z+1,y}^n-f_{z-1,y}^n)\Big)^2 \; ds
\]
and can be written as
\[
\tfrac{8\lambda}{\sqrt n}\int_0^t\sum_{z\in\bb Z} [\omega_z^n(s)]^2\Big(\sum_{y\notin\{z-1,z,z+1\}} \omega_y^n (s) (f_{z+1,y}^n-f_{z-1,y}^n)\Big)^2\; ds.
\]
We sum and subtract the mean of $\big(\omega_z^n (s)\big)^2$ to write last term as
\begin{align}
&\tfrac{8\lambda}{\sqrt n}\int_0^t\sum_{z\in\bb Z}\big( [\omega_z^n(s)]^2-\beta^{-1}\big)\Big(\sum_{y\notin\{z-1,z,z+1\}} \omega_y^n (s) (f_{z+1,y}^n-f_{z-1,y}^n)\Big)^2\; ds \label{eq:last_term1}\\
&+\tfrac{8\lambda}{\sqrt n}\int_0^t\sum_{z\in\bb Z}\beta^{-1}\Big(\sum_{y\notin\{z-1,z,z+1\}} \omega_y^n (s) (f_{z+1,y}^n-f_{z-1,y}^n)\Big)^2\; ds.\label{eq:last_term2}
\end{align}
Now we estimate the variance of each term separately. 
First, we note that the mean of \eqref{eq:last_term1} is zero so that its variance is given by
\begin{align*}
\tfrac{Ct^2}{n}\int_\Omega &\; \sum_{z\in\bb Z}(\omega_z^2-\beta^{-1})\Big(\sum_{y\notin\{z-1,z,z+1\}} \omega_y (f_{z+1,y}^n-f_{z-1,y}^n)\Big)^2\\ &\times\sum_{\bar z\in\bb Z}(\omega_{\bar z}^2-\beta^{-1})\Big(\sum_{u\notin\{\bar z-1,\bar z,\bar z+1\}} \omega_{u}  (f_{\bar z+1,u}^n-f_{\bar z-1,u}^n)\Big)^2\; \mu_\beta(d\omega).\end{align*}
To bound from above this last expression, we expand the squares and use the independence of the centered random variables $\{\omega_x\}_{x \in \ZZ}$. Therefore last expectation is bounded from above by the sum of two terms, according to $z=\bar z$ and $z \ne \bar z$. The first one is
\[
\tfrac{t^2C(\beta)}{n}\sum_{z\in\bb Z}\sum_{y,u\notin\{z-1,z, z+1\}}(f_{z+1,y}^n-f_{z-1,y}^n)^2 (f_{ z+1,u}^n-f_{ z-1,u}^n)^2,
\]
which, by \eqref{eq:taylor_exp},  can be bounded from above by 
\[
\tfrac{t^2C(\beta)}{n^3}\sum_{z\in\bb Z}\Big(\sum_{y\notin\{z-1, z,z+1\}}(\partial_vf^n_{z,y})^2\Big)^2\leq\tfrac{C}{n}
\]
and vanishes as $n\to\infty.$
The second is
\[
\tfrac{Ct^2}{n}\int_\Omega \sum_{z\neq \bar z\in\bb Z}(\omega_z^2-\beta^{-1}) \omega_{\bar z}^2(f_{z+1,\bar z}^n-f_{z-1,\bar z}^n)^2(\omega_{\bar z}^2-\beta^{-1})\omega_{z}^2  (f_{\bar z+1,z}^n-f_{\bar z-1,z}^n)^2\; \mu_\beta(d\omega).
\]
Last expectation is bounded from above by
\[
\tfrac{t^2C(\beta)}{n^3}\sum_{z\neq \bar z\in\bb Z} (\partial_vf^n_{z+1,\bar z})^4\leq \tfrac{C}{n^{3/2}},
\]
and vanishes as $n\to\infty$.
Now we compute the variance of   \eqref{eq:last_term2} which, by developing the square in the sum, can be written as 
\[\tfrac{8\lambda}{\sqrt n}\int_0^t\sum_{z\in\bb Z}\beta^{-1}\sum_{y,\bar y \notin\{z-1,z,z+1\}} \omega^n_y(s) \omega^n_{\bar y}(s)(f_{z+1,y}^n-f_{z-1,y}^n)(f_{z+1,\bar y}^n-f_{z-1,\bar y}^n)\; ds.
\]
First note that its mean is given by
\[
\tfrac{8\lambda t}{\sqrt n}\sum_{z\in\bb Z}\beta^{-2}\sum_{y\notin\{z-1,z,z+1\}}  (f_{z+1,y}^n-f_{z-1,y}^n)^2,
\]
and therefore, its variance  can be bounded from above by
\begin{align*}
&\tfrac {Ct^2}{n}\; \int_\Omega\Big(\sum_{z\in\bb Z}\beta^{-1}\sum_{y\notin\{z-1,z,z+1\}} (\omega_y^2-\beta^{-1}) (f_{z+1,y}^n-f_{z-1,y}^n)^2\Big)^2\; \mu_\beta(d\omega)\\
&+\tfrac {Ct^2}{n}\; \int_\Omega\Big(\sum_{z\in\bb Z}\beta^{-1}\sum_{\substack{y \ne \bar y\\ y, {\bar y} \notin\{z-1,z,z+1\}}} \omega_y \omega_{\bar y}(f_{z+1,y}^n-f_{z-1,y}^n)(f_{z+1,\bar y}^n-f_{z-1,\bar y}^n) \Big)^2 \; \mu_\beta(d\omega).
\end{align*}
Now, the first expectation in the previous display can be bounded from above by
$$\tfrac{C(\beta) t^2}{n^3}\sum_{z, \bar z\in \bb Z}\sum_{y\notin\{z-1,z, z+1\}}(\partial_vf^n_{z,y})^2(\partial_vf^n_{\bar z,y})^2\leq \tfrac{C}{n}$$
and vanishes as $n\to\infty$; while the second one can be bounded from above by
$$\tfrac{C(\beta) t^2}{n}\sum_{y\neq \bar y}\sum_{z,\bar z}(f_{z+1, y}^n-f_{z-1, y}^n)(f_{z+1,\bar y}^n-f_{z-1,\bar y}^n)(f_{\bar z+1,\bar y}^n-f_{\bar z-1,\bar y}^n)(f_{\bar z+1, y}^n-f_{\bar z-1,y}^n)$$
which is equal to
$$\tfrac{C(\beta) t^2}{n^3}\sum_{\substack{y \ne \bar y\\ y, {\bar y} \notin\{z-1,z,z+1\}}}\Big(\sum_{z}(\partial_vf^n_{z, y})(\partial_vf^n_{z,\bar  y})\Big)^2\leq \tfrac {C}{n}$$
and vanishes as $n\to\infty.$

\section*{Acknowledgements}
This work benefited from the support of the project EDNHS  ANR-14-CE25-0011 of the French National Research Agency (ANR) and of the PHC Pessoa Project 37854WM. 
This project has received funding from the European Research Council (ERC) under  the European Union's Horizon 2020 research and innovative programme (grant agreement No 715734).

C.B.~thanks the French National Research Agency (ANR) for its support through the grant ANR-15-CE40-0020-01 (LSD). 

 P.G.~thanks  FCT/Portugal for support through the project 
UID/MAT/04459/2013.

M.J.~thanks CNPq for its support through the grant 401628/2012-4 and FAPERJ for its support through the grant JCNE E17/2012. M.J.~was partially supported by NWO Gravitation Grant 024.002.003-NETWORKS and by MathAmSud grant LSBS-2014.

M.S.~thanks  CAPES (Brazil) and IMPA (Instituto de Matematica Pura e Aplicada, Rio de Janeiro) for the post-doctoral fellowship, and  also the Labex CEMPI (ANR-11-LABX-0007-01) for its partial support.

\bibliographystyle{amsplain}

\begin{thebibliography}{A}

\bibitem{BBO1} G. Basile, C. Bernardin, S. Olla, \textit{Momentum conserving model with anomalous thermal conductivity in low dimensional systems}, Phys. Rev. Lett. {\bf 96} (2006), 204303.

\bibitem{BBO2} G. Basile, C. Bernardin, S. Olla, \textit{Thermal conductivity for a momentum conserving model}, Comm. Math. Phys. {\bf 287} (2009), no. 1, 67--98.



\bibitem{BGJ}
C. Bernardin, P. Gon{\c{c}}alves, M. Jara, \textit{3/4-fractional superdiffusion in a system of harmonic oscillators perturbed by a conservative noise}, Arch. Rational Mech. Anal. {\bf 220} (2016), Issue 2,  505--542. 

\bibitem{BGJSS} C. Bernardin, P. Gon\c{c}alves, M. Jara, M. Sasada, M. Simon, \textit{From normal diffusion to superdiffusion of energy in the evanescent flip noise limit}, Jour. of Stat. Phys.   {\bf 159} (2015), no. 6, 1327--1368. 


\bibitem{BS}
C. Bernardin,  G. Stoltz, \textit{Anomalous diffusion for a class of systems with two conserved quantities}, Nonlinearity {\bf 25} (2012), no. 4, 1099--1133.

\bibitem{FerPasUla} E. Fermi, J. Pasta and S. Ulam (1955) \textit{Studies of nonlinear problems. I}. Los Alamos report LA-1940, published later in Collected Papers of Enrico Fermi, E. Segré (Ed.), University of Chicago Press (1965).

\bibitem{gj2015}
P. Gon\c calves, M. Jara, \textit{Density fluctuations for exclusion processes with long jumps}, Preprint Arxiv: 1503.05838 (2015).


\bibitem{J} M. Jara, \textit{Quadratic Fluctuations of the Simple Exclusion Process}, Preprint Arxiv: 1401.2609 (2014).

\bibitem{JKO2} M. Jara, T. Komorowski, S. Olla, \textit{Superdiffusion of energy in a chain of harmonic oscillators with noise},  Comm. Math. Phys. {\bf 339} (2015), no. 2, 407--453.

\bibitem{kl} C. Kipnis, C. Landim, Scaling limits of interacting particle systems, Springer-Verlag, Berlin, (1999).
%

\bibitem{KLO} T. Komorowski, C. Landim, S. Olla, Fluctuations in Markov processes, vol. 345 of Grundlehren der Mathematischen Wissenschaften [Fundamental Principles of Mathematical Sciences], Springer, Heidelberg  (2012). Time symmetry and martingale approximation.


\bibitem{mitoma} I. Mitoma, \textit{Tightness of probabilities on $\mc C([0,1];\mc S')$ and $\mc D([0,1];\mc S')$}, Ann. Prob. {\bf 11} (1983), no. 4, 989--999.

\bibitem{sethu} S. Sethuraman,  \textit{Central Limit Theorems for Additive Functionals of the Simple Exclusion
Process}, Ann. Prob. {\bf 28} (2000), 277--302.


\bibitem{whitt} W. Whitt, \textit{Proofs of the martingale FCLT}, Probability Surveys {\bf 4} (2007), 268--302.

\end{thebibliography}

\end{document}